\def\Ddots{\mathinner{\mkern1mu\raise\p@
\vbox{\kern7\p@\hbox{.}}\mkern2mu
\raise4\p@\hbox{.}\mkern2mu\raise7\p@\hbox{.}\mkern1mu}}
\titleformat*{\subsection}{\Large\bfseries}
\titleformat*{\subsubsection}{\large\bfseries}
\titleformat*{\paragraph}{\large\bfseries}
\titleformat*{\subparagraph}{\large\bfseries}
\theoremstyle{plain}
\newtheorem{thm}{Theorem}[section]
\newtheorem{lem}[thm]{Lemma}
\newtheorem{con}[thm]{Conjecture}
\theoremstyle{definition}
\newtheorem{defn}[thm]{Definition}
\newcommand{\thistheoremname}{}
\newtheorem*{genericthm*}{\thistheoremname}
\newenvironment{namedthm*}[1]
  {\renewcommand{\thistheoremname}{#1}%
   \begin{genericthm*}}
  {\end{genericthm*}}
\newcommand{\N}{\mathbb{N}}
\newcommand{\F}{\mathcal{F}}
\newcommand{\bN}{\beta\mathbb{N}}
\newcommand{\FS}{\operatorname{FS}}
\date{\vspace{-5ex}}
\begin{document}

\title{\textbf{Exponential Schur and Hindman Theorem in Ramsey Theory}}
\author{ 
Sayan Goswami\\  \textit{ sayan92m@gmail.com}\footnote{Ramakrishna Mission Vivekananda Educational and Research Institute, Belur Math,
Howrah, West Benagal-711202, India.}
 \and  
	Sourav Kanti Patra\\\textit{souravkantipatra@gmail.com}\footnote{K.S.M. College, Q976+WXP, Aurangabad, Bihar 824101.}}

\maketitle
\begin{abstract}
Answering a conjecture of A. Sisto, J. Sahasrabudhe proved the exponential version of the Schur theorem: for every finite coloring of the naturals, there exists a monochromatic copy of $\{x,y,x^y:x\neq y\},$ which initiates the study of exponential Ramsey theory in arithmetic combinatorics.
In this article, We first give two short proofs of the exponential Schur theorem, one using Zorn's lemma and another using $IP_r^\star$ van der Waerden's theorem.  Then using the polynomial van der Waerden theorem iteratively we give a proof of the exponential Hindman theorem. Then applying our results we prove for every natural number $m,n$ the equation $x_n^{x_{n-1}^{\cdot^{\cdot^{\cdot^{x_1}}}}}=y_1\cdots y_m$ is partition regular, which can be considered as the exponential version of a more general version of the  P. Csikv\'{a}ri, K. Gyarmati, and A. S\'{a}rk\"{o}zy conjecture, which was solved by V. Bergelson and N. Hindman independently. As a consequence of our results, we also prove that for every finite partition of $\N,$ there exists two different sequences $\langle x_n\rangle_n$ and $\langle y_n\rangle_n$ such that both the multiplicative and exponential version of Hindman theorem generated by these sequences resp. are monochromatic, whereas in the counterpart in the finitary case, J. Sahasrabudhe proved that both sequences are the same. Our result can be considered as an exponential analog to the result of V. Bergelson and N. Hindman. We also prove that a large class of ultrafilters with certain properties do not exist, which could give us direct proof of the exponential Schur theorem. This result can be thought of as partial evidence of the nonexistence of Galvin-Glazer's proof of the exponential Hindman theorem.
\end{abstract}

\noindent \textbf{Mathematics subject classification 2020:} 05D10, 05C55,  22A15, 54D35.\\
\noindent \textbf{Keywords:} Exponential patterns, Hindman theorem, Polynomial van der Waerden theorem, Algebra of the Stone-\v{C}ech compactification of discrete semigroups.

\tableofcontents

\section{Introduction}

Arithmetic Ramsey theory deals with the monochromatic patterns found in any given finite coloring of the
integers or of the natural numbers $\N$. Here ``coloring” means disjoint partition and a set is called ``monochromatic” if it is included in one piece of the partition. Any equation $f(x_1,\cdots ,x_n)=0$ is called partition regular if for every finite coloring of $\N,$ there exists monochromatic $\{x_1,\cdots ,x_n\}$ such that $f(x_1,\cdots ,x_n)=0.$  Arguably the first substantial development in this area of research was due to I. Schur \cite{key-2} in $1916,$ when he proved that the family $\{\{x,y,x+y\}:x\neq y\}$ is a partitioned regular over $\N.$ In other words, we can say that the equation $x+y=z$ is partition regular. For every $n\in \N,$ passing to the induced coloring $n\rightarrow 2^n$ we can say that the equation $x\cdot y=z$ is partition regular. The second cornerstone result in this field of
research is Van der Waerden's Theorem, which states that for any finite coloring of the natural numbers one always finds arbitrarily long monochromatic arithmetic progressions. To state the first substantial development in the direction of infinitary Ramsey theory, we need the following definition.
\begin{defn}[IP set]\text{}
    \begin{enumerate}
        \item For any nonempty set $X$, let $\mathcal{P}_f(X)$ be the set of all nonempty finite subsets of $X.$
        \item For any injective sequence $\langle x_n\rangle_n$ in any commutative semigroup $(S,+)$, a set of the form $$FS(\langle x_n\rangle_n)=\left\lbrace \sum_{t\in H}x_t:H\in \mathcal{P}_f(N)\right\rbrace$$ is called an $IP$ set.
    \end{enumerate}
\end{defn}
Hindman's finite sum theorem  \cite{key-21} is one of the fundamental theorems, and is the first result of infinitary Ramsey theory, which states that 
 for every finite coloring of $\N$, there exists a monochromatic $IP$ set.

Recent research has focused on finding exponential patterns in the Ramsey theory. Answering Sisto's conjecture in \cite{key-4},  J. Sahasrabudhe proved the following theorem.

\begin{thm}[\textbf{Sahasrabudhe-Schur Theorem}]\label{important1}
  For any finite coloring of $\N$, there exists a monochromatic pattern of the form  $\{x,y,x^y\}$. 
\end{thm}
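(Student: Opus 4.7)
Let $c:\N\to\{1,\dots,r\}$ be a finite coloring; the goal is to find distinct $x,y$ with $c(x)=c(y)=c(x^y)$. My plan is to restrict the whole search to powers of a fixed base, turning the exponential pattern into a Schur-type pattern that involves a mildly nonlinear operation, and then to handle that pattern via an $IP_r^\star$ van der Waerden argument (matching the second of the two proofs advertised in the abstract).

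First, I would pass to the induced coloring $\tilde c(n):=c(2^n)$. Writing $x=2^a$ and $y=2^b$ with $a\neq b$, one has $x^y=2^{a\cdot 2^b}$, so the three elements are monochromatic under $c$ if and only if the triple $(a,b,a\cdot 2^b)$ is monochromatic under $\tilde c$. This reduces the theorem to the following finitary statement: in every finite coloring of $\N$, there exist distinct $a,b$ with $\tilde c(a)=\tilde c(b)=\tilde c(a\cdot 2^b)$. The advantage of this reformulation is that the third point is now obtained from the first two by a concrete, if nonpolynomial, operation, and both the first two are unconstrained integers rather than towers.

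Second, I would attack the reduced statement with the $IP_r^\star$ strengthening of van der Waerden. Ordinary vdW applied to $\tilde c$ furnishes a long monochromatic arithmetic progression $\{a_0,a_0+d,a_0+2d,\dots\}$ inside some class $\tilde c^{-1}(i)$; what remains is to pick $a,b$ from this progression so that the auxiliary point $a\cdot 2^b$ also lands in $\tilde c^{-1}(i)$. The $IP_r^\star$ vdW theorem gives exactly the extra freedom needed: it allows the common difference of the AP to be prescribed inside any preassigned $IP_r^\star$ set. I would prescribe that set to encode the membership condition $a_0\cdot 2^{b}\in \tilde c^{-1}(i)$; a pigeonhole argument over the $r$ colors shows that for some $i$ this pullback set is $IP_r^\star$ for an appropriate $r$, at which point the $IP_r^\star$ vdW theorem delivers the desired configuration and any two distinct elements of the resulting AP give the sought-for $a,b$.

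The main obstacle is exactly the verification in the previous step -- that the pullback of a color class of $\tilde c$ through the nonpolynomial map $b\mapsto 2^b$ retains enough combinatorial structure to be $IP_r^\star$. This is where the Bergelson-Hindman machinery for $IP_r^\star$ sets becomes indispensable: polynomial vdW is not sufficient because of the exponential shift, but the purely combinatorial characterisation of $IP_r^\star$ sets in terms of intersections with finite-sum sets lets one argue by iterated pigeonhole on the finite palette. Once this bookkeeping is set up, distinctness $a\neq b$ is automatic from the AP having length at least two, and the triple $(2^a,2^b,2^{a\cdot 2^b})$ is monochromatic as required.
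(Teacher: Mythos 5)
Your first step is exactly the paper's reduction: pass to $\tilde c(m)=c(2^m)$ and look for a monochromatic $\{a,b,a\cdot 2^b\}$, which is the pattern $\{x,y,xn^y\}$ of Theorem \ref{main1} with $n=2$. The gap is in your second step, and it is a genuine one. You claim that ``a pigeonhole argument over the $r$ colors shows that for some $i$ this pullback set is $IP_r^\star$''. This is false: $IP_r^\star$ is a dual (intersectivity) property, and partitioning $\N$ into finitely many cells only guarantees that some cell is an $IP_r$ set (indeed piecewise syndetic), never that some cell --- or the preimage of a cell under $b\mapsto a_0\cdot 2^b$ --- is $IP_r^\star$. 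For $r\ge 2$ colors it is entirely typical that no color class is even $IP^\star$. Since your whole mechanism for forcing $a\cdot 2^b$ into the right class rests on this claim, the argument collapses at that point; moreover, even granting a monochromatic AP with difference in a prescribed set, you have not explained how membership of $a$ and $b$ in that AP would control the element $a\cdot 2^b$, which lives at a completely different scale.

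The paper's first proof resolves precisely this difficulty by reversing where the $IP_r^\star$ largeness sits and by exploiting multiplicative largeness of a cell, which your sketch never uses. One fixes a cell $A$ of the induced coloring that is \emph{multiplicatively} piecewise syndetic, so that $\bigcup_{f\in F}f^{-1}A=T$ is multiplicatively thick for a finite $F$ with $|F|=k$. Theorem \ref{n} (linear case) says that the set of admissible differences $y$ --- those $y$ for which a suitable monochromatic pair $\{a,a+y\}$ exists for every $k$-coloring of a long enough interval $[M]$ --- is $IP_r^\star$; since the piecewise syndetic set $A$ is an $IP_r$ set for every $r$, this $IP_r^\star$ set of differences meets $A$, so the difference $y$ can be taken inside $A$. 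That is the legitimate duality argument: the $\star$-property is attached to the vdW difference set, and the color class only needs to be $IP_r$. Finally, the exponential/multiplicative part is absorbed by thickness: choose $t$ with $\{t\cdot 2^m:m\le M\}\subset T$, color $m\in[M]$ by the least $f\in F$ with $ft2^m\in A$, and extract $a$ and $y\in A$ with $x=ft2^a\in A$ and $x2^y=ft2^{a+y}\in A$; the geometric progression converts the additive relation $a+y$ on exponents into multiplication by $2^y$, which your AP-only framework cannot do. So the intended toolkit ($IP_r^\star$ vdW plus the $\log_2$ reduction) is the right one, but you would need to restructure the argument along these lines rather than asking a color class to be $IP_r^\star$.
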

For further development, we refer to the article \cite{js}.
Later using the ultrafilter method, in \cite{key-5}, M. Di Nasso and M. Ragosta proved the Theorem \ref{important1}. Then soon after
  using the Stronger Central Sets Theorem (originally proved by D. De, N. Hindman, and D. Strauss in \cite{DHS}), in \cite{key-6}, M. Di Nasso and M. Ragosta proved a key theorem \cite[Theorem 3.1]{key-6}, and as a consequence, they proved the following exponential version of Hindman's theorem.

\begin{thm}[\textbf{Exponential Hindman Theorem}]\label{important}
For every finite partition of $\N$, there exists an injective sequence $\langle x_n\rangle_{n\in \N}$ such that $$FEP(\langle x_n\rangle_n)=\left\lbrace x_{i_n}^{x_{i_{n-1}}^{\cdot^{\cdot^{\cdot^{x_{i_1}}}}}}:1\leq i_1<\cdots <i_{n-1}<i_n, n\in \N\right\rbrace$$ is monochromatic. We refer to such a pattern as Hindman Tower.
\end{thm}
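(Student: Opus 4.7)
The plan is to build the sequence $\langle x_n\rangle$ greedily, preserving at every stage the invariant that the partial set of towers lies in a single color class of $\chi$. The key structural observation is that in any tower $x_{i_n}^{x_{i_{n-1}}^{\cdots^{x_{i_1}}}}$ with $i_1<\cdots<i_n$, the largest index appears only as the outermost base. Hence, if $x_1<\cdots<x_k$ have been constructed and $F_k := FEP(x_1,\ldots,x_k) \subseteq \chi^{-1}(c)$, then
\[
F_{k+1} \;=\; F_k \,\cup\, \{x_{k+1}\} \,\cup\, \{x_{k+1}^T : T \in F_k\}.
\]
The inductive step therefore reduces to a one-step extension problem: given the finite set $F_k \subseteq \chi^{-1}(c)$, produce $x_{k+1} \in \chi^{-1}(c)$ such that $x_{k+1}^T \in \chi^{-1}(c)$ for every $T \in F_k$.

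The main technical ingredient is this one-step extension, which is where the polynomial van der Waerden theorem enters. A natural route is to restrict the candidate $x_{k+1}$ to be of the form $b^d$ for an appropriate base $b$, and to pass to the induced coloring $\tilde\chi(n):=\chi(b^n)$ of the exponent space: the desired exponential pattern $\{x,x^{s_1},\ldots,x^{s_r}\}$ with $x=b^d$ then corresponds to the dilation pattern $\{d, s_1 d,\ldots, s_r d\}$ in $\tilde\chi$. Polynomial van der Waerden applied to the polynomials $p_i(d)=s_i d$ together with a linking polynomial $p_0(d)=d$ (all vanishing at $0$) yields monochromatic configurations of the form $\{a+d, a+s_1 d,\ldots, a+s_r d\}$ in $\tilde\chi$. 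The qualifier ``iteratively'' reflects that this is carried out afresh at every stage of the induction, with the list of polynomials determined by the current set $F_k$, and that several successive rounds of polynomial vdW may be needed within a single stage.

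The main obstacle is the additive shift $a$ returned by polynomial van der Waerden. Lifted through the exponential map, the monochromatic configuration $\{a+s_i d\}$ becomes $\{b^a \cdot (b^d)^{s_i}\}$, which is literally of the form $\{x,x^{s_1},\ldots,x^{s_r}\}$ only when $a=0$. The shift cannot be removed by a single application of the theorem, because the single equation $y=x^s$ is not partition regular in general (a double-logarithm coloring witnesses this already for $s=2$). The iterative scheme is designed precisely to absorb this shift by restricting the candidate base $b$ and exponent $d$ to a sufficiently structured subset (for example, one obtained from a preliminary application of polynomial vdW to the initial coloring), so that the shift is compatible with the tower structure being built. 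A secondary subtlety, equally important, is ensuring the color $c$ remains fixed across all inductive stages; this is handled by pre-committing to $c$ at the outset via pigeonhole on an initial configuration from the exponential Schur theorem and then carrying out the entire iteration inside $\chi^{-1}(c)$.
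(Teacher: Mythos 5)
Your reduction to a one-step extension problem is structurally sound (in any tower the largest index is indeed the outermost base, so $F_{k+1}=F_k\cup\{x_{k+1}\}\cup\{x_{k+1}^T:T\in F_k\}$), and you have correctly located the crux: the additive shift $a$ returned by polynomial van der Waerden destroys the exponential structure, since $b^{a+s_id}$ is not $(b^{a+d})^{s_i}$. But the proposal stops exactly where the proof has to begin. The sentence claiming that the iteration absorbs the shift ``by restricting the candidate base $b$ and exponent $d$ to a sufficiently structured subset (for example, one obtained from a preliminary application of polynomial vdW to the initial coloring)'' is not an argument: a preliminary application of van der Waerden yields one finite configuration, which cannot supply the infinitely many mutually compatible shifts needed at all later stages. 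Relatedly, your inductive invariant is too weak. It is not true that every finite monochromatic $F_k$ admits an extension $x_{k+1}$ with $\{x_{k+1}\}\cup\{x_{k+1}^T:T\in F_k\}$ still monochromatic: Theorem \ref{ex1} exhibits a color class (in a $2$-coloring) that is additively and multiplicatively thick yet contains no exponential pair $x,y,x^y$ at all, so the extension step can already fail for a singleton $F_1$. Pre-committing to a color by pigeonhole on one Schur configuration does not fix this; the color class must carry a largeness property (membership in a specific ultrafilter) that is recorded and propagated by the induction.

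What actually closes the gap in the paper (Theorem \ref{DiR}, extended to Theorem \ref{main2}) is a triple of mechanisms absent from your sketch. First, one arranges that the color class $A$ belong to an ultrafilter of the form $n^{p}\cdot q$ with $p\in E\left(K(\beta\N,+)\right)$, so that its elements factor as $n^{m}\cdot n'$ with the exponent $m$ and the multiplier $n'$ controlled independently; the pattern produced at stage $k$ is the multiplicative shift $x_k\cdot n^{p(x_{k-1})}$ of $x_k$, not a power of $x_k$, and the passage back to genuine towers is a separate (nontrivial) change of variables using the growth functions $f_{n+1}(x)=(2^{x})^{f_n(x)}$. Second, the van der Waerden shift $a_k$ is not removed but absorbed into the exponent of the \emph{next} element, $x_{k+1}=n^{a_1+\cdots+a_k}\,n_{k+1}$; the additive idempotency of $p$ (via $B^{\star}$ and the intersections $B^{\star}\cap(-a_1+B^{\star})\cap\cdots$) is what keeps the accumulated shifts admissible at every one of the infinitely many stages. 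Third, the sets of good differences $D_k$ furnished by Theorem \ref{n} are $IP_r^{\star}$, hence they and their dilates $\frac{1}{n^{c}}D_k$ all lie in the single ultrafilter $q$, which is what makes the infinitely many constraints on $n_{k+1}$ simultaneously satisfiable. Without a substitute for these three ingredients your greedy construction cannot be continued, so the proposal has a genuine gap at its central step.
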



Before proceeding to the original statements of our works, we need to recall some technical definitions and basic results from the Algebra of the Stone-\v{C}ech compactification. Here we prefer to discuss this theory briefly. The readers familiar with the basics of this theory can skip this subsection and go directly to Section \ref{ourresults}.


\subsection{Ultrafilters and its Connections with Ramsey Theory}
Ultrafilters are set-theoretic objects that are intimately related to Ramsey theory. In this section, we give a brief introduction to this theory. For details, we refer to the article  \cite{key-22} to the readers. 
A filter $\mathcal{F}$ over any nonempty set $X$ is a collection of subsets of $X$ such that

\begin{enumerate}
    \item $\emptyset \notin \mathcal{F}$, and $X\in \mathcal{F}$,
    \item $A\in \mathcal{F}$, and $A\subseteq B$ implies $B\in \mathcal{F},$ and
    \item $A,B\in \mathcal{F}$ implies $A\cap B\in \mathcal{F}.$
\end{enumerate}
Using Zorn's lemma we can guarantee the existence of maximal filters which are called ultrafilters. Any ultrafilter $p$ has the following property:
\begin{itemize}
    \item if $X=\bigcup_{i=1}^rA_i$ is any finite partition of $X$, then there exists $i\in \{1,2,\ldots ,r\}$ such that $A_i\in p.$
\end{itemize}

Therefore to prove a certain structure is monochromatic, it is sufficient to study ultrafilters each of whose members contain that structure.

\subsubsection{Space of Ultrafilters}
Let $(S,\cdot)$ be a discrete semigroup. Denote by $\beta S$ the collection of all ultrafilters over $S.$ For any $(\emptyset \neq)A\subseteq S,$ let $\overline{A}=\{p\in \beta S:A\in p\}.$ The collection of sets $\{\overline{A}:(\emptyset \neq)A\subseteq S\}$ forms a basis for a topology. One can show with this topology $\beta S$ is the Stone-\v{C}ech compactification
of $S.$ For any $A\subseteq S,$ and $x\in S$ denote $x^{-1}A=\{y:x\cdot y\in A\}.$ Now one can naturally extend the operation `$\cdot$' over the set $\beta S:$ for any $p,q\in \beta S$, let us define $p\cdot q$ by
$$A\in p\cdot q\iff \{x:x^{-1}A\in q\}\in p.$$
One can show with this definition $(\beta S,\cdot )$ is a semigroup. In fact one can prove that
 $(\beta S,\cdot)$ is a compact right topological semigroup. Here right topological means right action is continuous: for each $q\in \beta S,$ the map $\rho_q:\beta S\rightarrow \beta S$ defined by $\rho_q(p)=p\cdot q$ is continuous. However, the left action is not always continuous but: for $x\in S,$  the map  $\lambda_x:\beta S\rightarrow \beta S$ defined by $\lambda_x(p)=x\cdot p$ is continuous.
 Now from Ellis Theorem, we can say that any closed subsemigroup of $(\beta S,\cdot)$ contains idempotents. However, with some little modification, we can define the notions of $IP$ sets over any arbitrary semigroup, but we are not going for that. One can see that
 \begin{itemize}
     \item If $(S,\cdot )$ is a semigroup, then a set $A\subseteq S$ contains an $IP$ set if and only if there exists an element $p\in E(\beta S,\cdot)$ such that $A\in p.$
\end{itemize}

 Note that if $f:(S,\cdot )\rightarrow (T,\cdot)$ be any map between discrete semigroups. Then $\tilde{f}:\beta S\rightarrow \beta T$ be the continuous extension of $f$ defined by $\tilde{f}(p)=\tilde{f}(\lim_{x\rightarrow p} x).$
 
\subsubsection{Notions of Large Sets}
In the study of ultrafilter theory, one can show some members of $\beta S$ are combinatorial rich. In fact one can characterize combinatorially the members of such ultrafilters. During our study we will apply both combinatorial and algebraic characterizations of those ultrafilters. Here we summarize some of the notions of large sets intimately related to our study. 

\begin{defn}\label{rev2defn} Let $(S,\cdot)$ be a semigroup, let $n\in\N$ and let $A\subseteq S$. We say that
\begin{enumerate} 
\item  for any subset $T\subseteq (\beta S,\cdot)$, we let $E(T)$ be the collection of all idempotents of $T;$ 
\item $A$ is a {\it thick set} if for any finite subset $F\subset S$, there exists an element $x\in S$ such that $Fx=\{fx:f\in F\}\subset A$;
\item $A$ is a {\it syndetic set} if there exists a finite set $F\subset S$ such that $S=\bigcup_{x\in F}x^{-1}A$, where $x^{-1}A=\{y:xy\in A\}$;
\item $A$ is {\it piecewise syndetic set} if there exists a finite set $F\subset S$ such that $\bigcup_{x\in F}x^{-1}A$ is a thick set. It is well known that $A$  is piecewise syndetic if and only if there exists $p\in K(\beta S,\cdot)$ such that $A\in p.$

\end{enumerate} \end{defn}
A set $L\subseteq \beta S$ is said to be a left ideal if $ \beta S\cdot L=\{q\cdot p:p\in L,q\in \beta S\}\subseteq L.$ Similarly $R\subseteq \beta S$ is said to a left ideal if $ R\cdot \beta S=\{p\cdot q:p\in R,q\in \beta S\}\subseteq R.$ Again using Zorn's lemma one can show that minimal left/ right ideals (w.r.t. inclusion) exist. It can be proved that minimal left ideals are closed.
One can show that if $A\subseteq S $ is a piecewise syndetic set, then there exists a minimal left ideal $L$ of $(\beta S,\cdot)$, and $p\in L$ such that $A\in p.$ Denote by $K(\beta S,\cdot)$, the union of all minimal left ideals of $(\beta S,\cdot)$. It can be proved that $K(\beta S,\cdot)$ is also the union of all minimal right ideals of $(\beta S,\cdot)$. That is $$
\begin{aligned}
	K(\beta S) & =  \bigcup\{L:L\text{ is a minimal left ideal of }\beta S\}\\
	&=  \bigcup\{R:R\text{ is a minimal right ideal of }\beta S\}.
\end{aligned}$$
As each minimal left ideal is the image of $\beta S$ right action map, it is closed, and hence it contains idempotents. Hence, $ E\left(K(\beta S,\cdot)\right)\neq \emptyset $.

Central sets play a vital role in Ramsey theory.
The notion of central sets was first introduced by H. Furstenberg \cite{F}, using the methods of topological dynamics. Later in \cite{central}, V. Bergelson and N. Hindman first proved an equivalent version of central sets in $\N$, using ultrafilter theory, and then in \cite{SY}, H. Shi and H. Yang extended this result for arbitrary semigroups.
Here we recall the algebraic definition.
\begin{defn}[\textbf{Central set}]
A set $A\subseteq S $ is said to be central if there exists $p\in E\left(K(\beta S,\cdot)\right)$ such that $A\in p.$
\end{defn}

A set $A\subseteq \N$ is said to be an $IP$ set if there exists an injective sequence $\langle x_n\rangle_n,$ such that $A=\FS(\langle x_n\rangle_n),$ and for some $r\in \N$, it is said to be an $IP_r$ set if there exists a sequence $\langle x_n\rangle_{n=1}^r$ such that $A=\FS(\langle x_n\rangle_{n=1}^r)=\{\sum_{t\in H}x_t:(\neq \emptyset )H\subseteq \{1,2,\ldots ,r\}\}.$
If $\F$ is a family of sets, then a set $A$ is dual to this family if $A\cap F\neq \emptyset$ for every $F\in \F$. After we prove a certain structure is preserved in a large set, an immediate question appears: how much structure is preserved in a single cell? In this type of study, we need these notions of dual sets. Some well-known dual sets that we use are $IP^\star$ sets, $IP_r^\star$ sets, etc. Note that if $\F$ and $\mathcal{G}$ are two families of subsets of $S$ satisfying $\F\subseteq \mathcal{G}$, then each $ \mathcal{G}^\star$ set is also a $ \mathcal{F}^\star$. As a consequence, every $IP_r^\star$ set is an $IP^\star$ set.

\subsection{Our Results}\label{ourresults}
\subsubsection{Sahasrabudhe-Schur Theorem}

To capture exponential patterns, in \cite{key-4}, J. Sahasrabudhe used $\log_2$ base partitions. For example, if $\N=\cup_{i=1}^rA_i$ is any finite partition of $\N$, then we can induce another partition of $\N=\cup_{i=1}^rB_i$ by $$m\in B_i\iff 2^m\in A_i.$$ Then if we can show that there exists some $i\in \{1,2,\ldots ,r\}$ such that $\{a,b,a2^b\}\subset B_i$, then this immediately implies $\{x=2^a,y=2^b,x^y=2^{a2^b}\}\subset A_i.$ This immediately reduces the complexity of Theorem \ref{important1} to the problem of finding partition regularity of $\{a,b,a2^b\}.$
In this article, we prove the following theorem which strengthens Theorem \ref{important1} by capturing a large class of ultrafilters each of whose members contains a pattern in Theorem \ref{important1}.

\begin{thm}\label{main1} Let $n\in\mathbb{N}, n>1$ and let $A\subseteq\N$ be multiplicatively piecewise syndetic. Then there exists $x,y\in A$ such that $xn^{y}\in A$. \end{thm}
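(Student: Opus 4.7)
The plan is to use the algebra of $(\beta\N,\cdot)$ combined with the $IP_{r}^{\star}$ van der Waerden theorem mentioned in the introduction.

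\textbf{Setup and reduction.} Since $A$ is multiplicatively piecewise syndetic, fix $p\in K(\beta\N,\cdot)$ with $A\in p$, and choose a multiplicative idempotent $e$ in the minimal left ideal of $(\beta\N,\cdot)$ containing $p$, so that $e\cdot p=p$. Define $A^{\star}:=\{x\in\N:x^{-1}A\in p\}$. Unwinding the definition of multiplication of ultrafilters, $A\in p=e\cdot p$ yields $A^{\star}\in e$, so $A^{\star}$ is multiplicatively central. The key reduction is: \emph{it is enough to find $y\in A$ with $n^{y}\in A^{\star}$.} Indeed, given such a $y$, $n^{-y}A\in p$, so $A\cap n^{-y}A\in p$ is in particular nonempty, and any $x$ in this intersection gives $\{x,y,xn^{y}\}\subseteq A$.

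\textbf{Finding such $y$.} To produce the desired $y$, the plan is: first, using centrality of $A^{\star}$ (so $A^{\star}\in e$ and $e\cdot e=e$), inductively choose naturals $y_{1}<y_{2}<\dots<y_{N}$ so that $n^{\sum_{i\in H}y_{i}}\in A^{\star}$ for every nonempty $H\subseteq\{1,\dots,N\}$. This places the additive $\FS$-set $\FS(\langle y_{i}\rangle_{i=1}^{N})$ inside $\{y\in\N:n^{y}\in A^{\star}\}$. Then, choosing $N$ large enough, the $IP_{r}^{\star}$ van der Waerden theorem forces $\FS(\langle y_{i}\rangle_{i=1}^{N})$ to meet the multiplicatively piecewise syndetic set $A$; any element of the intersection is the required $y$.

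\textbf{Main obstacle.} The technical difficulty lies in the inductive step producing $y_{k+1}$ from $y_{1},\dots,y_{k}$: one needs $n^{y_{k+1}}$ to lie not only in $A^{\star}$ but also in every multiplicative translate $(n^{\sum_{i\in H}y_{i}})^{-1}A^{\star}$ for $H\subseteq\{1,\dots,k\}$. This would be immediate if the subsemigroup $\{n^{k}:k\in\N\}$ belonged to $e$, but it does not (this subsemigroup is not multiplicatively piecewise syndetic in $\N$). The way around this is to pass through the continuous homomorphism $\tilde\phi:(\beta\N,+)\to(\beta\N,\cdot)$ given by $y\mapsto n^{y}$: for every additive idempotent $q$, $\tilde\phi(q)$ is a multiplicative idempotent, and a careful choice of $q$ compatible with $e$ (so that $A^{\star}\in\tilde\phi(q)$) yields $\phi^{-1}(A^{\star})\in q$, producing the desired $\FS$-set in the exponents. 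Making this compatibility precise, while simultaneously keeping $A$ in the chosen $q$ so that the final intersection is forced, is the crux of the proof; alternatively one can bypass $\tilde\phi$ and set it up as a direct application of $IP_{r}^{\star}$ van der Waerden to a suitably finitized coloring reflecting membership in $A$ and $A^{\star}$.
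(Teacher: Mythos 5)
Your reduction is fine and is in the same spirit as the paper's second proof: with $p\in K(\bN,\cdot)$ and $A\in p$, setting $S=\{x:x^{-1}A\in p\}$, it does suffice to produce $y\in A$ with $n^{y}\in S$, since then $A\cap (n^{y})^{-1}A\in p$ is nonempty and any $x$ in it completes the triple. (A minor imprecision in the setup: an idempotent $e$ in the minimal left ideal $L$ containing $p$ is a \emph{right} identity on $L$, so what comes for free is $p\cdot e=p$; to get $e\cdot p=p$ with $e$ idempotent you need the extra observation that $\{u\in L:u\cdot p=p\}$ is a nonempty closed subsemigroup.) The genuine problem is in how you propose to find such a $y$. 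The closing step --- ``choosing $N$ large enough, the $IP_{r}^{\star}$ van der Waerden theorem forces $\FS(\langle y_{i}\rangle_{i=1}^{N})$ to meet $A$'' --- is not a consequence of Theorem \ref{n} and is false in general: Theorem \ref{n} certifies that a certain set of \emph{common differences} attached to an additively piecewise syndetic set is $IP_{r}^{\star}$; it does not make $A$ itself additively $IP_{N}^{\star}$, and a multiplicatively piecewise syndetic set need not be $IP_{N}^{\star}$ for any $N$ (e.g.\ $\N\setminus[1,2^{N}]$ is cofinite, hence multiplicatively syndetic, yet misses $\FS(\langle 2^{i-1}\rangle_{i=1}^{N})$). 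Since your $y_{i}$ are chosen with reference only to $A^{\star}$ and $e$, nothing forces the resulting $\FS$-set to intersect $A$. Moreover the inductive construction of the $y_{i}$ is itself the point you concede as ``the crux'': running Galvin--Glazer in the exponents requires an additive idempotent $q$ with both $\{y:n^{y}\in A^{\star}\}\in q$ and $A\in q$, i.e.\ it requires $A\cap\{y:n^{y}\in A^{\star}\}$ to be an additive $IP$ set --- a strengthening of the theorem, not a lemma available to you. As written, the proposal reduces the statement to a harder unproved one.

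The idea you are missing, and which both of the paper's proofs exploit, is to use the \emph{syndeticity} of $S$ (Theorem 4.39 of Hindman--Strauss) rather than the centrality of $A^{\star}$. Since $\N=\bigcup_{t\in F}t^{-1}S$ covers everything, one may take an additive $IP_{|F|+1}$ set $\FS(\langle b_{i}\rangle_{i=1}^{|F|+1})\subset A$, exponentiate its initial partial sums $n^{b_{1}},n^{b_{1}+b_{2}},\dots$, and apply the pigeonhole principle to find $t\in F$ and $m<n'$ with $tb$ and $tbn^{c}$ both in $S$, where $b=n^{b_{1}+\cdots+b_{m}}$ and $c=b_{m+1}+\cdots+b_{n'}$. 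The exponent $c$ lies in $A$ for free, being a partial sum, and choosing $a\in A\cap(tb)^{-1}A\cap(tbn^{c})^{-1}A\in p$ gives $x=atb\in A$, $y=c\in A$, and $xn^{y}\in A$. This sidesteps entirely the need to land $n^{y}$ in a central set via the exponential homomorphism.
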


The importance of the above ultrafilter strengthening will be explored in Section \ref{3}, where we will capture several new exponential combinatorial patterns.

\subsubsection{Exponential Hindman Theorem}

 Let $(S,\cdot )$ and $(T,\cdot )$ be two discrete semigroups. Then the tensor product of $p$ and $q$ is defined as
$$p\otimes q=\{A\subseteq S\times T: \{x\in S: \{y:(x,y)\in A\}\in q\}\in p\}$$ 
where $x\in S$, $y\in T.$ For detailed application of tensor pairs on combinatorics see the article \cite{bhw}.
For the function $f:\N^{2}\rightarrow \N$ defined by $f(n,m)=2^{n}m,$ let $\overline{f}:\beta\left(\N^{2}\right)\rightarrow\bN$ be the continuous extension of the function $f$ over $\bN$. Define the operation $``\star"$ as the restriction of $\overline{f}$ to tensor pairs, namely: for all $p,q\in\beta\N$
\[p\star q=\overline{f}(p\otimes q)=2^{p}\cdot q.\]


 The following, which is Theorem 3.1 in \cite{key-6} implies the exponential version of the Hindman theorem.

\begin{thm}\label{DiR} 
 Let $p\in E\left(K(\beta\mathbb{N},+)\right)$
and $A\in p\star p$. For every $N\in\mathbb{N}$ and for every sequence
$\varPhi=\left(f_{n}:n\geq2\right)$ of functions $f_{n}:\mathbb{N}\rightarrow\mathbb{N},$
there exists a sequence $\left(a_{k}\right)_{k\in\mathbb{N}}$ such
that
\[
\mathcal{F}_{k,\varPhi}\left(a_{k}\right)_{k\in\mathbb{N}}=\left\{ a_{k}\cdot2^{\sum_{i=1}^{k-1}\lambda_{i}a_{i}}:k\in\mathbb{N},0\leq\lambda_{1}\leq N,\text{ and }0\leq\lambda_{i}\leq f_{i}\left(a_{i-1}\right)\text{ for }2\leq i\leq k-1\right\} \subset A.
\]
\end{thm}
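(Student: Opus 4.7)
Unpacking $A\in p\star p$ via the definition of $\star$ as the restriction of the continuous extension of $f(n,m)=2^{n}m$ to tensor pairs, one gets that
\[B \;:=\; \{n\in\mathbb{N} : 2^{-n}A\in p\}\;\in\; p,\qquad \text{where}\qquad 2^{-n}A := \{m\in\mathbb{N}:2^{n}m\in A\}.\]
Since $p\in E(K(\beta\mathbb{N},+))$ is a minimal additive idempotent, $B$ is central in $(\mathbb{N},+)$. The point is that every admissible exponent-sum $s$ produced by the sequence $(a_k)$ must land in $B$, because then $2^{-s}A\in p$ and the associated base $a_k$ may be picked from a $p$-large set.

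\textbf{Inductive plan.} I will build $(a_k)_{k\in\mathbb{N}}$ stage by stage, maintaining the invariant $(\text{INV})_k$: every admissible partial sum $s=\sum_{i=1}^{k}\lambda_{i}a_{i}$ satisfies $s\in B$ and $B-s\in p$. Given $(\text{INV})_{k-1}$, set $M_k:=f_k(a_{k-1})$ (with $M_1:=N$), and let $S_{k-1}$ denote the finite family of admissible partial sums up to stage $k-1$. I then seek $a_k$ satisfying:
\begin{itemize}
\item[(a)] $a_k\in\bigcap_{s\in S_{k-1}}2^{-s}A$, a set in $p$ by $(\text{INV})_{k-1}$; and
\item[(b)] $s+\lambda a_k \in B$ for every $s\in S_{k-1}$ and $\lambda\in\{1,\dots,M_k\}$, yielding $S_k\subseteq B$ and, after the standard idempotent refinement $B\mapsto B\cap\{x:B-x\in p\}\in p$, the full $(\text{INV})_k$.
\end{itemize}

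\textbf{Combinatorial core and main obstacle.} Condition (b) demands that a single $a_k$ furnish a length-$M_k$ homogeneous progression $\{a_k,2a_k,\dots,M_ka_k\}$ inside each shift $B-s$ simultaneously. Because the system $\{x,2x,\dots,Mx\}$ is not Rado-regular --- for instance, the $2$-coloring $n\mapsto\nu_{2}(n)\bmod 2$ kills already the case $M=2$ --- condition (b) cannot be wrung out of $p$-largeness by shifts and idempotency alone. The tool that does produce such configurations in any prescribed central set is the Stronger Central Sets Theorem of \cite{DHS}, applied at stage $k$ to the central set
\[W_k \;:=\; \Bigl(\bigcap_{s\in S_{k-1}}2^{-s}A\Bigr)\cap\Bigl(\bigcap_{s\in S_{k-1}}(B-s)\Bigr)\in p\]
with a family of test sequences that encodes the bounded scalars $1,\dots,M_k$: the SCST produces a master sequence whose admissible bounded scaled-subsum configurations all lie in $W_k$, and $a_k$ is read off from the first block of that sequence. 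The main technical obstacle is the \emph{adaptive} nature of the bound $M_k=f_k(a_{k-1})$ --- known only once stage $k-1$ is finished --- which prevents a single global SCST application and forces the iteration to be carried out stage by stage. At each stage the finite data $(S_{k-1},M_k)$ is small enough for one SCST invocation, and the $p$-largeness of $W_k$ is preserved throughout, which is exactly what keeps the induction alive.
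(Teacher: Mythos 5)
First, a point of reference: the paper does not prove Theorem \ref{DiR} at all; it is quoted from Di Nasso and Ragosta \cite{key-6}, and the closest in-paper argument is the proof of its polynomial generalization, Theorem \ref{main2}. Measured against that proof (and against the original), your attempt has a genuine gap at exactly the place you flag as the ``combinatorial core.'' You correctly reduce $A\in p\star p$ to the centrality of $B=\{m:2^{-m}A\in p\}$, and you correctly observe that your condition (b) asks for the configuration $\{a_k,2a_k,\dots,M_ka_k\}$ inside each translate $B-s$, a configuration that is not partition regular (your $\nu_2\bmod 2$ coloring). But those two observations already doom the plan: for every finite partition some cell is central, so no partition-stable largeness notion --- centrality, $IP_r^\star$-ness, or the Stronger Central Sets Theorem of \cite{DHS} --- can force a non-partition-regular configuration into a prescribed central set. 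The SCST does not produce anchorless ``scaled subsums'' $\sum_{t\in H}\lambda_t y_t$; every configuration it yields carries an additive anchor, i.e., has the form $a+\sum_{t\in H}y_{f(t)}$, precisely because the anchorless version is false. So invoking the SCST does not establish (b); (b) is simply unobtainable, and the induction never gets off the ground.

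The actual proof evades this by changing the invariant: one does not arrange $\sum_{i\le k}\lambda_ia_i\in B$, but rather $c_k+\sum_{i\le k}\lambda_ia_i\in B$ for an anchor $c_k\in B^\star$ supplied by the $IP_r^\star$ van der Waerden theorem applied to the linear polynomials $\lambda d$, $0\le\lambda\le M_k$; the anchor is then absorbed into the next term by setting $a_{k+1}=2^{c_k}n_{k+1}$, with $n_{k+1}$ drawn from the second (``$q$'') coordinate of the tensor pair. This is exactly the shape of the proof of Theorem \ref{main2}: $x_1=n^{m_1}n_1$, $x_2=n^{a_1}n_2$, $x_3=n^{a_1+a_2}n_3$, and the membership $x_k\cdot n^{\sum p_i(x_i)}\in A$ is read off from $c_{k-1}+\sum p_i(x_i)\in B$ together with $n_k\in\{n':n^{c_{k-1}+\sum p_i(x_i)}n'\in A\}$. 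Note that this restructuring also repairs a second defect of your setup: taking all $\lambda_i=0$, the theorem asserts $a_k\in A$, and your invariant at $s=0$ would require $A\in p$, which does not follow from $A\in p\star p=2^p\cdot p$; with $a_k=2^{c_{k-1}}n_k$ one only needs $c_{k-1}\in B$, which the anchors provide. Your opening move is right, but the inductive engine must be rebuilt around anchored van der Waerden configurations rather than dilates.
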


 Let $N=1$, and choose the function $f_2(x)=2^x.$ Inductively assume that we have defined the function $f_n(x)$ for some $n\in \N$. Now define $f_{n+1}(x)=(2^x)^{f_n(x)}$.  Now choosing $\varPhi=\left(f_{n}:n\geq2\right)$, Theorem \ref{DiR} immediately implies Theorem \ref{important}, for details see \cite{key-6}.
Our next result is aimed to find out the polynomial extension of Theorem \ref{DiR}. 
Before we state our result, let us recall the polynomial van der Waerden's theorem was proved in \cite{70}. We will apply this theorem iteratively to prove our result.
The following version of the polynomial van der Waerden's theorem is a direct consequence of the polynomial Hales-Jewett theorem, proved in \cite{71}.

\begin{thm}\label{n}
 Let $P$ be a finite collection
of polynomials with integer coefficient, and having no constant term, and let $A$ be an additively piecewise
syndetic set. Then 
\[
\left\{ d\in\N\mid\text{ there exists }a\in\N\text{ such that }\left\{ a,a+p(d):p\in P\right\} \subset A\right\} 
\]
is an $IP_r^{\star}$ set for some $r\in \N$. 
\end{thm}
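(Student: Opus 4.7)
The plan is to derive Theorem~\ref{n} directly from the polynomial Hales--Jewett theorem of Bergelson and Leibman, using piecewise syndeticity only to convert the conclusion from ``monochromatic'' to ``inside $A$''. Write $D$ for the set of ``good steps'' whose $IP_r^{\star}$-ness we must establish. Unfolding the definition, we must show there exists $r\in\N$ such that for every choice of $x_1,\ldots,x_r\in\N$, some partial sum $d=\sum_{i\in H}x_i$ (with $\emptyset\neq H\subseteq\{1,\ldots,r\}$) lies in $D$, i.e.\ admits $a$ with $\{a,a+p(d):p\in P\}\subset A$.

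First I would convert the hypothesis on $A$ into a finite coloring of arbitrarily long intervals. Since $A$ is additively piecewise syndetic, there is a finite $F\subset\N$ such that $T=\bigcup_{x\in F}(A-x)$ is thick; define $\chi:T\to F$ by $\chi(n)=\min\{x\in F:n+x\in A\}$, and extend $\chi$ arbitrarily to $\N$. Thus $\chi$ is an $|F|$-coloring of $\N$ whose color classes inside $T$ sit inside translates of $A$.

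Next I would invoke the polynomial Hales--Jewett theorem applied to the family $P$ with $|F|$ colors. In its set-polynomial form, that theorem delivers an integer $r=r(P,|F|)$ with the following property: for any sequence $x_1,\ldots,x_r\in\N$, and for any $|F|$-coloring of a sufficiently long initial segment of $\N$, there exist a basepoint $b$ and a nonempty $H\subseteq\{1,\ldots,r\}$ for which
\[
\chi(b)=\chi\!\Bigl(b+p\Bigl(\sum_{i\in H}x_i\Bigr)\Bigr)\quad\text{for every }p\in P.
\]
Since $T$ is thick, one can choose the basepoint $b$ inside $T$ together with all the shifted points $b+p(\sum_{i\in H}x_i)$; setting $x=\chi(b)$ and $a=b+x\in A$ yields $\{a,a+p(\sum_{i\in H}x_i):p\in P\}\subset A$. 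Therefore $d=\sum_{i\in H}x_i\in D$, so $D$ meets every $\FS(\langle x_n\rangle_{n=1}^{r})$, i.e.\ $D$ is $IP_r^{\star}$.

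The main conceptual obstacle is the final step: ordinary polynomial van der Waerden yields \emph{some} step $d$, but not one of prescribed IP form. What makes the $IP_r^{\star}$ conclusion available is precisely the extra strength of polynomial Hales--Jewett over polynomial van der Waerden, namely that its ``direction'' parameter is inherently a partial sum over subsets of a fixed finite ground set $\{1,\ldots,r\}$ whose generators $x_1,\ldots,x_r$ may be chosen in advance. Matching those generators to an arbitrary prescribed sequence is the only subtle point; once done, translating set-polynomial combinatorial lines into the required pattern inside $A$ is routine.
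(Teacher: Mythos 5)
Your proposal is correct and takes essentially the same route as the paper, which gives no proof of Theorem \ref{n} beyond asserting it as a direct consequence of the Bergelson--Leibman polynomial Hales--Jewett theorem \cite{71}; you have simply spelled out that deduction (piecewise syndeticity yielding an $|F|$-coloring of a thick set $T$, PHJ supplying the partial-sum step $d=\sum_{i\in H}x_i$ with $r$ depending only on $P$ and $|F|$, and monochromaticity translating into membership in $A$). The one wording fix is to apply the finitary PHJ statement to a sufficiently long interval contained in $T$ (which thickness provides, with length allowed to depend on $x_1,\dots,x_r$), rather than to an initial segment of $\N$, so that the basepoint and all shifted points automatically lie in $T$ and the common color genuinely certifies membership in translates of $A$.
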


Let $\mathbb{P}$ be the set of all polynomials with rational coefficients without constant terms. For every $P\in \mathbb{P},$ $deg(P)$ is the degree of the polynomial $P,$ and $coef(P)=\max \{|c|: c\text{ is a coefficient of }P\}.$ For every $n(>1)\in \N,$ define the operation $\star_n$ on $\N$ by $a\star_nb=n^ab.$ 
The above result is all we need to prove the following polynomial extension of Theorem \ref{DiR}.

\begin{thm}[\textbf{Polynomial extension of Theorem \ref{DiR}}]\label{main2} Let $n\in \N$, $p\in E\left(K(\beta\mathbb{N},+)\right)$
and $q\in \bN$ be such that for every $N\in \N$, each element of $q$ contains an $IP_N$ sets. Let $n\in \N_{>1},$ and  $A\in p\star_n q$ and  $F_{1}\in \mathcal{P}_f(\mathbb{P})$. Let $\varPhi=\left(f_{n}:n\geq2\right)$ be a sequence
of functions $f_{n}:\mathbb{N}\rightarrow\mathbb{N}.$ 
For every $n(>1),x\in \N$, let $F_{n,x}=\{P\in \mathbb{P}:deg(P)\leq f_n(x)\text{ and } coef(P)\leq f_n(x)\}.$ 

Under this hypothesis we have a sequence $\left(x_{k}\right)_{k\in\mathbb{N}}$
such that

\[
\mathcal{PF}^n_{k,\varPhi}\left(x_{k}\right)_{k\in\mathbb{N}}=\left\{ x_{k}\cdot n^{\sum_{i=1}^{k-1}p_{i}\left(x_{i}\right)}:k\in\mathbb{N},p_{1}\in F_{1},\text{ and }p_{i}\in F_{i,f_{i}\left(x_{i-1}\right)}\text{ for }2\leq i\leq k-1\right\} \subset A.
\]
\end{thm}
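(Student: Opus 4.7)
The plan is to mimic the proof of Theorem \ref{DiR} by iterating the polynomial van der Waerden theorem (Theorem \ref{n}) in place of its linear analogue, while controlling membership in $A$ via the ultrafilter calculus of $p\star_n q$ and the $IP_N$-richness of members of $q$.

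\textbf{Setup.} Let $A_a:=\{b\in\N : n^a b\in A\}$. The hypothesis $A\in p\star_n q$ unfolds to $B:=\{a\in\N : A_a\in q\}\in p$. Since $p\in E(K(\bN,+))$ is a minimal idempotent, $B$ is additively central, hence piecewise syndetic, and the star closures $B^{(0)}:=B$, $B^{(j+1)}:=\{a\in B^{(j)}: B^{(j)}-a\in p\}$ all lie in $p$.

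\textbf{Inductive construction.} I construct $(x_k)_k$ by recursion, maintaining at step $k$ the invariants (i) $x_j\cdot n^\sigma\in A$ for every $j\leq k$ and every $\sigma\in S_{j-1}$, where $S_0:=\{0\}$ and $S_j:=\{\sum_{i=1}^j p_i(x_i):p_1\in F_1,\ p_i\in F_{i,f_i(x_{i-1})}\text{ for }2\leq i\leq j\}$; and (ii) $S_k\subseteq B^{(r_k)}$ for a depth $r_k$ chosen so that the recursion can continue. Given $x_1,\dots,x_{k-1}$, invariant (ii) implies that $E_k:=\bigcap_{\sigma\in S_{k-1}} A_\sigma\in q$ (a finite intersection of members of $q$) and that $H_k:=B^{(r_{k-1}-1)}\cap\bigcap_{\sigma\in S_{k-1}}\big(B^{(r_{k-1}-1)}-\sigma\big)\in p$ is piecewise syndetic. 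Applying Theorem \ref{n} to $H_k$ with the finite family $F_{k,f_k(x_{k-1})}$ of constant-term-free integer-coefficient polynomials yields that the set $D_k$ of admissible step sizes $d$ (those for which some $a$ makes $\{a\}\cup\{a+p(d):p\in F_{k,f_k(x_{k-1})}\}\subseteq H_k$) is an $IP_{N_k}^\star$ set for some $N_k\in\N$. Since every member of $q$ contains an $IP_{N_k}$ set, $E_k\cap D_k\neq\emptyset$; choose $x_k\in E_k\cap D_k$. Invariant (i) at level $k$ is immediate from $x_k\in E_k$.

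\textbf{Main difficulty.} Re-establishing invariant (ii) once $x_k$ is chosen is the crux: Theorem \ref{n} only yields $a+p(x_k)\in H_k\subseteq B^{(r_{k-1}-1)}-\sigma$ shifted by an auxiliary witness $a\in H_k$, whereas (ii) demands the unshifted partial sums $\sigma+p(x_k)$ themselves to lie in $B^{(r_k)}$. I expect the gap to be closed by using the nested chain $B^{(j)}$ to absorb $a$, descending one level per step ($r_k<r_{k-1}$) via the $p$-invariance $B^{(j)}-y\in p$ for $y\in B^{(j+1)}$. Sustaining this absorption over a countable induction is the central technical point, since a naive descending chain exhausts in finitely many steps; the natural remedy is to replace it by working inside a single $p$-stable set (a \emph{$C$-set}-type member of $p$ produced by the idempotent structure) that supports the absorption uniformly in $k$, which is the avenue I would pursue to complete the proof.
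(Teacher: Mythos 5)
There is a genuine gap, and you have named it yourself: the ``main difficulty'' paragraph does not resolve the crux but only sketches two candidate remedies (a descending chain of star-closures, or retreating to a $C$-set), neither of which is carried out. As written, invariant (ii) --- that the \emph{unshifted} partial sums $\sigma=\sum_{i\le k}p_i(x_i)$ lie in a $p$-large subset of $B$ --- is exactly what Theorem \ref{n} does not give you, and the proposal ends before showing how to recover it. Moreover, your worry that ``a naive descending chain exhausts in finitely many steps'' is a red herring: for an idempotent $p$ the single set $B^\star=\{a\in B:-a+B\in p\}$ already satisfies $-a+B^\star\in p$ for \emph{every} $a\in B^\star$, so translates of $B^\star$ can be intersected indefinitely with no loss of depth; no tower $B^{(j)}$ and no $C$-set machinery is needed.

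The paper closes the gap in a different and simpler way: it does not try to force the unshifted sums into $B$ at all. Instead, the auxiliary van der Waerden witnesses are accumulated into the elements themselves. At stage $k$ one has a witness $a_k$ with $\{a_k,\,a_k+p_k(x_k):p_k\in F_{k,f_k(x_{k-1})}\}\subset B_k^\star$, where $B_k^\star$ is $B^\star$ intersected with the translates $-(c+\sum_{i<k}p_i(x_i))+B^\star$ for the running total $c=a_1+\cdots+a_{k-1}$; one then sets $x_{k+1}=n^{c+a_k}\,n_{k+1}$ with $n_{k+1}$ chosen in a suitable member of $q$ (here the $IP_N$-richness of $q$ is used, exactly as in your nonemptiness argument, except that $n_{k+1}$ is also placed in $n^{-(c+a_k)}D_{k+1}$ so that $x_{k+1}$ itself lands in the next van der Waerden set). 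Then
\[
x_{k+1}\cdot n^{\sum_{i\le k}p_i(x_i)}=n^{(c+a_k)+\sum_{i\le k}p_i(x_i)}\,n_{k+1},
\]
and membership in $A$ follows because the \emph{shifted} exponent $(c+a_k)+\sum_{i\le k}p_i(x_i)$ lies in $B^\star$ --- which is precisely the conclusion of Theorem \ref{n} --- so the corresponding fibre $\{n':n^{(c+a_k)+\sum p_i(x_i)}n'\in A\}$ is in $q$ and can be folded into the set from which $n_{k+1}$ is drawn. So your setup and your use of the $IP_N$ hypothesis on $q$ match the paper, but the decisive idea --- carrying the accumulated witnesses $\sum a_i$ inside the definition of $x_k$ rather than absorbing them into $B$ --- is missing, and without it the induction does not close.
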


\subsubsection{Non-Existence of Galvin-Glazer Type Proofs}

The ultrafilter proof of the Hindman theorem is attributed to an unpublished paper by Galvin-Glazer, where they observed that an idempotent ultrafilter exists, and this immediately implies the Hindman theorem. But we doubt if such proof exists in the exponential world. This is our study's next goal, where we address whether $``\star_n"$ idempotent ultrafilter exists in $\bN$ or not. This question looks promising as if there exists $p\in \bN$ such that $p\star_np=p$, then it is a routine exercise to check for every $A\in p,$ $A$ contains patterns of the form $\{x,y,xn^y\}.$ Then the ultrafilter $n^p$ will witness the Sahasrabudhe-Schur theorem. The existence of such ultrafilters would be the first breakthrough toward the exponentiations of Galvin-Glazer's proof of the Hindman theorem. Unfortunately, our observation suggests that such an ultrafilter may not exist. So we make the following conjecture.

\begin{con}\label{con2} For any $n\in \N_{>1},$ $\star_{n}$-idempotent ultrafilters do not exist. \end{con}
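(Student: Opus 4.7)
The plan is to derive a contradiction from the multiplicative behaviour of the $n$-adic valuation. Suppose toward contradiction that some $p\in\bN$ satisfies $p\star_{n} p=p$. Unpacking the definition of $\star_{n}$ via the tensor product, membership reads
\[
A\in p\star_{n} p\iff \bigl\{a\in\N:\{b\in\N:n^{a}b\in A\}\in p\bigr\}\in p,
\]
so the goal is to exhibit a single witness $A\in p$ that forces the right-hand set to lie outside $p$.

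The witness comes from the level-set partition induced by the $n$-adic valuation. Set $v_{n}(k)=\max\{j\geq 0:n^{j}\mid k\}$ and $A_{j}=\{k\in\N:v_{n}(k)=j\}$; these classes partition $\N$, so exactly one of them, say $A_{j_{0}}$, lies in $p$. Using the identity $v_{n}(n^{a}b)=a+v_{n}(b)$, one immediately obtains
\[
\{b\in\N:n^{a}b\in A_{j_{0}}\}=A_{j_{0}-a}
\]
(empty whenever $a>j_{0}$). Since the classes $A_{j}$ are pairwise disjoint and only $A_{j_{0}}$ belongs to $p$, the set $A_{j_{0}-a}$ lies in $p$ precisely when $a=0$. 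Under the convention $\N=\{1,2,3,\ldots\}$ used throughout the paper, the set on the right-hand side of the displayed equivalence is therefore empty, contradicting $A_{j_{0}}\in p=p\star_{n}p$.

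The argument uses nothing beyond $n\geq 2$ and the additivity $v_{n}(n^{a}b)=a+v_{n}(b)$, so it is uniform in $n\in\N_{>1}$ and, in my opinion, presents no serious technical obstacle. The one point I would want to double-check carefully is the convention for $\N$: if $0$ is admitted, the principal ultrafilter concentrated at $0$ is a trivial $\star_{n}$-idempotent, and the conjecture must be read as excluding principal ultrafilters. In that weaker reading the same valuation argument still rules out every non-principal solution, since it would force $\{0\}\in p$, which is incompatible with $p$ being free.
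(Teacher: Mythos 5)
Your argument breaks down at the step ``these classes partition $\N$, so exactly one of them, say $A_{j_0}$, lies in $p$.'' The level sets $A_j=\{k\in\N: v_n(k)=j\}$, $j=0,1,2,\ldots$, form an \emph{infinite} partition of $\N$, and an ultrafilter is only guaranteed to select a cell from a \emph{finite} partition; a nonprincipal ultrafilter can perfectly well contain the complement of every single $A_j$. Worse, this is exactly what happens for every relevant candidate: if $p\star_n p=p$ then $p=n^p\cdot p$, so $p\in\mathbb{H}_n=\bigcap_{m\in\N}\overline{n^m\N}$ (this is observed at the start of Section \ref{sour}). Hence $n^{j+1}\N\in p$ for every $j$, and since $A_j\cap n^{j+1}\N=\emptyset$ we get $A_j\notin p$ for \emph{all} $j$. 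So your valuation computation correctly shows that no $\star_n$-idempotent can contain a level set of $v_n$, but such ultrafilters were already excluded by the trivial observation that idempotents lie in $\mathbb{H}_n$; the argument says nothing about the only remaining (and only interesting) candidates.

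You should also note that the statement you set out to prove is posed in the paper as a \emph{conjecture}, not a theorem: the authors explicitly say they could not prove it in full and establish only the partial result Theorem \ref{5}, that no $\star_n$-idempotent lies in $K(\beta\N,+)\cup E(\beta\N,+)$. Their proof of even that weaker statement needs genuinely nontrivial machinery --- the uniform compactification of $([0,\infty),+)$, the continuous extension of $\log_n$, Lemma \ref{2} ruling out solutions of $x+q=q$ there, and \cite[Corollary 3.42]{key-22} --- and they conclude only that idempotents, if any exist, must live in $\mathbb{H}_n\setminus\left(K(\beta\N,+)\cup E(\beta\N,+)\right)$. A two-line elementary proof of the full conjecture should therefore have been a red flag; the place to look for the error is precisely the appeal to an ultrafilter ``choosing'' a cell of an infinite partition.
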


However, we could not solve this question in its full strength but we proved a relatively weak version of our conjecture.
\begin{thm}\label{5}
For any $n\in \N_{>1}$, there is no  idempotent $p$ of $(\beta \mathbb{N}, \star_{n})$ such that $p\in K(\beta \mathbb{N}, +)\cup E(\beta \mathbb{N}, +)$.
\end{thm}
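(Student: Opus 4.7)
The plan is to argue by contradiction: assume $p$ is a $\star_n$-idempotent of $\bN$ lying in $K(\bN,+) \cup E(\bN,+)$, and derive a contradiction in each of the two cases.

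First I would establish a preliminary lemma holding for every $\star_n$-idempotent: $n^k\N \in p$ for each $k \in \N$. Indeed, if $p$ lay in a residue class $n^k\N + j$ with $j \not\equiv 0 \pmod{n^k}$, then unfolding $p = p \star_n p = n^p \cdot p$ would force
\[\{a \in \N : \{b \in \N : n^a b \in n^k\N + j\} \in p\} \in p;\]
but for $a \geq k$ the inner set is empty (since $n^a b \equiv 0 \not\equiv j \pmod{n^k}$), so the outer set is contained in the finite set $\{0,1,\dots,k-1\}$, which cannot belong to the free ultrafilter $p$.

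Next I would push the identity $p \star_n p = p$ through the $\ell$-adic valuation $v_\ell$ for a prime $\ell \mid n$, setting $c = v_\ell(n) \geq 1$. The pointwise formula $v_\ell(n^a b) = ca + v_\ell(b)$ lifts by continuity to the ultrafilter identity
\[v_\ell(p \star_n p) = cp + v_\ell(p) \quad\text{in }(\beta\Z,+),\]
which combined with $p \star_n p = p$ gives the central identity $v_\ell(p) = cp + v_\ell(p)$. In Case 1 ($p \in E(\bN,+)$), additive idempotence forces $cp = p$ (since $kp = p$ for all $k$), reducing this to $p + v_\ell(p) = v_\ell(p)$, so that $p$ is a left identity for $v_\ell(p)$ in $(\bN,+)$. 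In Case 2 ($p \in K(\bN,+)$), since $K(\bN,+)$ is a two-sided ideal, $cp \in K(\bN,+)$, and because it is also a right ideal, $v_\ell(p) = cp + v_\ell(p) \in K(\bN,+)$ as well.

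The hard step, where I expect the proof to be most delicate, is converting these identities into a contradiction. The intuitive obstruction is a size/growth mismatch: since $v_\ell(m) \leq \log_\ell m$, values in members of $v_\ell(p)$ are logarithmically small compared to those in members of $p$, whereas typical elements $ca + v_\ell(b)$ of $cp + v_\ell(p)$ are of the order of $ca$, i.e.\ ``$p$-sized''. To finish, I would try to exhibit a witness set $A \in p$ whose $v_\ell$-image cannot satisfy the identity; alternatively I would use a semigroup-theoretic argument exploiting that $q \mapsto n^q$ is a continuous homomorphism $(\bN,+) \to (\bN,\cdot)$, which makes $n^p$ a multiplicative idempotent compelled by $n^p \cdot p = p$ to act compatibly with $p$'s additive structure, contradicting the preliminary lemma's forcing $p$ onto high powers of $n$. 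The Case 2 finish would additionally use that $v_\ell(p)$ belongs to $K(\bN,+)$ and possibly iterate the valuation argument.
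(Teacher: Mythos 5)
Your valuation identity is a correct (and genuinely different) starting point: writing $cp$ for the image of $p$ under the dilation $a\mapsto ca$, one does get $\widetilde{v_\ell}(p)=cp+\widetilde{v_\ell}(p)$ from $p=n^p\cdot p$, whereas the paper works instead with the continuous extension of $\log_n$ into the uniform compactification of $([0,\infty),+)$. But two things go wrong. First, the Case 1 reduction ``$kp=p$ for all $k$ since $p$ is an additive idempotent'' is false: $p+p=p$ says nothing about the dilation image $\widetilde{m_k}(p)$. For instance, an idempotent $p$ containing $FS(\langle 10^j\rangle_j)$ cannot satisfy $3p=p$, because $3\cdot FS(\langle 10^j\rangle_j)$ is disjoint from $FS(\langle 10^j\rangle_j)$. (This step is only vacuous when $c=v_\ell(n)=1$, which fails already for $n=4$.) You cannot repair it by reading $cp$ as the $c$-fold sum $p+\cdots+p$, since then the lifted identity itself would be wrong.

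Second, and more seriously, the contradiction is never actually derived, and the ``size mismatch'' heuristic cannot supply it. An equation of the form $q+r=r$ with $q$ a \emph{nonprincipal} ultrafilter is not self-contradictory in $(\beta\N,+)$: for example, every $r$ in a minimal left ideal satisfies $u+r=r$ for some $u$ (this is exactly the paper's Lemma \ref{4}, used in the opposite direction), so knowing in Case 2 that $v_\ell(p)\in K(\beta\N,+)$ and is fixed by adding $cp$ leads nowhere by itself. What makes the paper's proof work is that it forces a \emph{finite} translation identity: from $p=n^q\cdot p=u+p$ it deduces, via \cite[Corollary 3.42]{key-22} applied to $cl\{n^m\cdot p:m\in\N\}$ and $cl\{m+p:m+p\notin\mathbb{H}_n\}$ together with the claim that at most one finite shift $m+p$ lies in $\mathbb{H}_n$, that either $n^x\cdot p=m+p$ (killed by $\mathbb{H}_n$ membership) or $n^m\cdot p=x+p$, which yields $m+f(p)=f(p)$ with $m\in\N$ in the uniform compactification of $([0,\infty),+)$ and is ruled out by the character $t\mapsto\exp(2\pi it/n)$ (Lemma \ref{2}). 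It is precisely the passage from a statement about nonprincipal translations to one about a fixed finite translation that is missing from your argument, and no substitute for it is proposed.
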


\subsubsection{Further Combinatorial Applications}

After accumulating all our results, we prove the exponential version of two well-known results in the literature in this section. As we are now losing our temper to start our proofs, we will discuss all these results with their proofs in Section \ref{3}.


\subsection{Structure and Machinery of the Paper:} 
\begin{enumerate}
\item In Section \ref{21}, we first provide two short proofs of Theorem \ref{main1}. In the first proof, we use a variant of van der Waerden's theorem: this proof is purely combinatorial. Then in the second proof, we use the fact that every piecewise syndetic set and its infinitely many translated copies is contained in the same ultrafilter: this proof uses the structure of $\bN.$ Then applying the polynomial van der Waerden theorem we prove Theorem \ref{main2}.

The fundamental idea of Section \ref{21} is an iterative application of the polynomial van der Waerden theorem along with the machinery of the structure of the $\bN.$ When we apply it once, it gives us the Sahasrabudhe-Schur pattern, and multiple times applications give us an infinitary Sahasrabudhe-Schur pattern, which we recognize as the Exponential Hindman theorem.

\item After that, in Section \ref{sour}, we address the exponential version of Galvin-Glazer's proof of the Hindman theorem. We prove for a large class of ultrafilters $``\star_n"$ idempotent ultrafilters do not exist, which implies the exponential Schur theorem. Of course, this is a partial answer to the Conjecture \ref{con2}, whereas the full answer is not known. The proof uses uniform compactification of topological spaces. We thank Prof. D. Strauss for her immense help with this section. This section is more technical than the other sections. The philosophy of the proof is that for a large class of ultrafilters, the existence of $``\star_n"$ idempotence gives rise to the existence of unsolvable equations over the uniform compactification of $([0,\infty),+).$

\item Then in Section \ref{3}, we focus on the study of partition regularity of the equation exponential equal product, and monochromatic product-exponential Hindman patterns.  The first one is an analogous study in the direction of P. Csikv\'{a}ri, K. Gyarmati, and A. S\'{a}rk\"{o}zy conjecture \cite{comb}: the equation $x+y=z$ is partition regular, which was first solved by V. Bergelson and N. Hindman independently in \cite{pissa, h2}. And the second one is the exponential version of the monochromatic pattern of the form $FS\left(\langle x_n\rangle_n\right)\cup FP\left(\langle y_n\rangle_n\right)$ proved in \cite{key-51, h1}.

The proof uses some algebraic arguments of the structure of $\bN,$ some results from the Topological dynamics, proved by N. Hindman, D. Strauss, and L. Zamboni in \cite{hsz}. Basically, we observed that for $s\in \N,$ and the map $f_s:\N\rightarrow \N$ given by $f_s(t)=n^st,$ the system $(\bN,\langle T_s\rangle_{s\in \N})$ forms a topological dynamical system, where $T_s:\bN\rightarrow \bN$ is the continuous extension of $f_s.$ This observation is a bridge between the exponential Ramsey theory and the Topological dynamics.
We strongly believe this observation should have a deep link with the new techniques developed in the recent breakthrough paper of D. Glasscock, and A. N. Le \cite{GL}, which must be explored. 
As many dynamically large sets can be captured using the analysis of filter algebra (developed in \cite{GL}), our dynamical system may be applied to strengthen all the results in this direction, which is to be studied.

\item Finally we conclude our paper with Section \ref{count} by constructing an additive-multiplicative thick set that does not contain the Sahasrabudhe-Schur pattern.

\end{enumerate}

\section{Proof of Exponential Schur and Hindman Theorem}\label{21}

In this section, we prove Theorem \ref{main1}, and Theorem \ref{main2}. We have already discussed all the preliminaries that we need to prove these two theorems. First, we prove Theorem \ref{main1}. The first proof of  Theorem \ref{main1} does not use Zorn's lemma.
We first prove the following theorem by using  $IP_r^\star$ van der Waerden's theorem (this is the linear version of Theorem \ref{n}). Previously we had a proof using the Hales-Jewett theorem, but later an anonymous person helped us with the following proof using the $IP_r^\star$ van der Waerden's theorem. We are grateful to his/her kindness.

\begin{proof}[First proof of Theorem \ref{main1}]
As $A$ is multiplicatively piecewise syndetic, there is a finite set $F$ such $\bigcup_{f\in F}f^{-1}A=T,$ a thick set. Let $k=|F|,$  and using Theorem \ref{n} the fact that multiplicatively piecewise
syndetic sets are $IP_r$ sets for all $r\in \N$, fix $M\in \N$ large enough so that any $k$-coloring of $[M]$ contains a monochromatic set $\{a,a+y\}$ with $y\in A.$ Using the fact
that $T$ is thick, let $t$ be such that $\{tn^m:m\leq M\}\subset T.$ Now k-color
$[M]$ by labeling $m\in M$ by the smallest $f\in F$ with $ftn^m\in A.$ By our
choice of M, there must be $a\in \N$ and $y\in A$ with $x=ftn^a\in A$ and $xn^y=ftn^an^y=ftn^{a+y}\in A$ completing the proof.
    
\end{proof}

The following second proof of Theorem \ref{main1} uses Zorn's lemma: the fact that ultrafilters exist. The elegance of the following proof is that we used only the pigeonhole principle.

\begin{proof}[Second proof of Theorem \ref{main1}] As $A$ is a multiplicative piecewise syndetic set, there exists $p\in K(\bN,\odot)$ such that $A\in p$. Hence, by \cite[Theorem 4.39]{key-22} the set $S=\{x:x^{-1}A\in p\}$ is syndetic. Let $\N=\bigcup_{t\in F}t^{-1}S$, and $|F|=r.$ As $A$ is a piecewise syndetic  set, it  contains an $IP_{r+1}$, say $FS(\langle b_i\rangle_{i=1}^{r+1})\subset A.$ 

    As $\{n^{b_1},n^{b_1+b_2},\ldots ,n^{b_1+\cdots +b_{r+1}}\}\subset \bigcup_{t\in F}t^{-1}S$ and $|F|=r$, by Pigeonhole principle there exists $t\in F$ and $m,n\in [1,r+1]$ such that $\{tn^{b_1+\cdots +b_m},tn^{b_1+\cdots b_n}\}\subset S.$ Let $m<n$, $b=n^{b_1+\cdots +b_m}$ and $c=b_{m+1}+\cdots b_n$. So, $\{tb,tbn^c\}\subset S$. Choose $a\in A\cap (tb)^{-1}A\cap (tbn^c)^{-1}A\in p.$ If $x=atb\in A$ and $y=c\in A$, by construction $xn^y\in A$.\end{proof}

Now we prove  Theorem \ref{main2}.

\begin{proof}[Proof of Theorem \ref{main2}]
To avoid the complexity in the calculation we will prove this theorem up to $k=3.$ 

The rest of the part can be proved iteratively or inductively. The technique is verbatim.

For any $p\in E\left(\beta\mathbb{N},+\right)$ and any $A\in p,$ define $A^{\star}=\{n\in A:-n+A\in p\}.$ As $p$ is an idempotent, we have  $A^{\star}\in p$.
Let $p\in E\left(K(\beta\mathbb{N},+)\right)$ and $q\in \bN$ be such that for every $N\in \N$, each element of $q$ contains an $IP_N$ sets.

Choose 
$A\in p\star_n q=n^p\cdot q.$ Then $B=\left\{ m_0:\left\{ n_0:n^{m_0}n_0\in A\right\} \in q\right\} \in p.$
Let $m_{1}\in B^{\star}$ and let $C_{1}=\left\{ n':n^{m_1}n'\in A\right\} \in q.$
Let $F_{1}$ be the finite collection of polynomials. Then from Theorem
\ref{n}, the set
\[
D_{1}=\left\{ d:\text{ there exists }a_{1}\text{ such that }\left\{ a_{1},a_{1}+p_{1}\left(d\right):p_1\in F_1\right\} \subset B^{\star}\right\} 
\]
 is an $IP_r^{\ast}$ set for some $r\in \N$. It is straightforward to check that for any $IP_r^\star$ set $C$ and $n\in \N,$ $n^{-1}C$ is an  $IP_r^\star$ set. Hence $\frac{1}{n^{m_{1}}}D_{1}$ is an  $IP_r^\star$ set. Again from \cite[Proposition 2.5]{star} we know that intersection of two $IP_r^{\star}$ set is again a $IP_l^{\star}$ set
 for some $l\in \N.$
 Hence we have  $\frac{1}{n^{m_{1}}}D_{1}\cap D_{1}$ is an $IP_s^{\star}$ set for some $s\in \N$, and so $\frac{1}{n^{m_{1}}}D_{1}\cap D_{1}\in q.$
 So we can choose an element $$n_{1}\in\frac{1}{n^{m_{1}}}D_{1}\cap D_{1}\cap C_{1}\in q.$$
Define $x_{1}=n^{m_{1}}n_{1}\in A.$ By our choice $x_{1}\in D,$ there
exists $a_{1}\in B^{\star}$ such that $$\left\{ a_{1},a_{1}+p_{1}\left(x_{1}\right):p_{1}\in F_{1}\right\} \subset B^{\star}.$$

Define $$B_{2}=B^{\star}\cap\left(-a_{1}+B^{\star}\right)\cap\bigcap_{p_{1}\in F_{1}}-\left(a_{1}+p_{1}\left(x_{1}\right)\right)+B^{\star}\in p;$$

and $$C_{2}=C_{1}\cap\bigcap_{p_{1}\in F_{1}}\left\{ n':n^{a_{1}+p_{1}\left(x_{1}\right)}n'\in A\right\} \cap\left\{ n':n^{a_{1}}n'\in A\right\} \in q.$$
As $B_2^\star$ is again a piecewise syndetic set, we have from Theorem
\ref{n}, the set 
\[
D_{2}= \left\{ d:\text{ there exists }a_{2}\text{ such that }\left\{ a_{2},a_{2}+p_{2}\left(d\right):p_{2}\in F_{2,f_{2}\left(x_{1}\right)}\right\} \subset B_{2}^{\star}\right\} 
\]
 is an $IP_r^{\star}$ set for some $r\in \N$. Now from \cite[Proposition 2.5]{star}, we have $\frac{1}{n^{a_{1}}}D_{2}\cap D_{2}\cap D_{1}\in q.$
So we can choose an element $$n_{2}\in\frac{1}{n^{a_{1}}}D_{2}\cap D_{2}\cap D_{1}\cap C_{2}\in q.$$
Define $n^{a_{1}}n_{2}=x_{2}\in A$ and so  for all $p_{1}\in F_{1}$ we have
\[
n^{a_{1}+p_{1}\left(x_{1}\right)}n_{2}=x_{2}n^{p_{1}\left(x_{1}\right)}\in A.
\]

 So the first step of induction is complete. 
 Now we show how the second step is coming from the first step, and this process can be adapted to show how the $(n+1)^\text{th}$ process comes from the $n^\text{th}$ step.

Now from the choice of $x_{2},$ we have $a_{2}\in B_{2}^{\star}$
such that $$\left\{ a_{2},a_{2}+p_{2}\left(d\right):p_{2}\in F_{2,f_{2}\left(x_{1}\right)}\right\} \subset B_{2}^{\star}.$$
Letting $c=a_{1}+a_{2}$, we have
\[
\left\{ c+p_{1}\left(x_{1}\right)+p_{2}\left(x_{2}\right):p_{1}\in F_{1},p_{2}\in F_{2,f_{2}\left(x_{1}\right)}\right\} \subset B^{\star}.
\]

Define $$C_{3}=C_{1}\cap\bigcap_{p_{1}\in F_{1}}\bigcap_{p_{2}\in F_{2,f_{2}\left(x_{1}\right)}}\left\{ n':n^{c+p_{1}\left(x_{1}\right)+p_{2}\left(x_{2}\right)}n'\in A\right\} \in q$$
and $$B_{3}^{\star}=B^{\star}\cap\bigcap_{p_{1}\in F_{1}}\bigcap_{p_{2}\in F_{2,f_{2}\left(x_{1}\right)}}-(c+p_{1}\left(x_{1}\right)+p_{2}\left(x_{2}\right))+B^{\star}\in p.$$
Again as $B_3^\star$ is a piecewise syndetic set, we have from Theorem \ref{n}, the set
\[
D_{3}= \left\{ d:\text{ there exists }a_{3}\text{ such that }\left\{ a,a+p_{3}\left(d\right):p_{3}\in F_{3,f_{3}\left(x_{2}\right)}\right\} \subset B_{3}^{\star}\right\} 
\]
 is an $IP_r^{\star}$ set for some $r\in \N.$  Now from \cite[Proposition 2.5]{star}, we have $$D_{3}\cap\bigcap_{p_{1}\in F_{1}}\bigcap_{p_{2}\in F_{2,f_{2}\left(x_{1}\right)}}\frac{1}{n^{c+p_{1}\left(x_{1}\right)+p_{2}\left(x_{2}\right)}}D_{3}\in q.$$
 Hence we can choose $$n_{3}\in D_{3}\cap\bigcap_{p_{1}\in F_{1}}\bigcap_{p_{2}\in F_{2,f_{2}\left(x_{1}\right)}}\frac{1}{n^{c+p_{1}\left(x_{1}\right)+p_{2}\left(x_{2}\right)}}D_{3}\cap C_{3}.$$
Define $n^{c}n_{3}=x_{3}\in A,$ and so  for all $p_{1}\in F_{1},\text{ and }p_{2}\in F_{2,f_{2}\left(x_{1}\right)},$ we can choose
\[
n^{c+p_{1}\left(x_{1}\right)+p_{2}\left(x_{2}\right)}n_{3}=x_{3}n^{p_{1}\left(x_{1}\right)+p_{2}\left(x_{2}\right)}\in A.
\]

This is how the third step comes from the second step, and now one can follow the induction process similarly to conclude our results.
\end{proof}

\section{On the Non-Existence of Galvin-Glazer Type Proofs of Exponential Hindman Theorem}\label{sour}
As we stated before, in this section, we will prove that for certain ultrafilters, $``\star_n"$ idempotents do not exist.
Before we proceed to our main theorem, we will prove three lemmas. Note that if $(X,\mathcal{U})$ is a uniform space, then there is a topological compactification $(\phi, \gamma_uX)$ of $X$ such that it is precisely the uniformly continuous functions in $C_\mathbb{R}(X)$ which have continuous extensions to $\gamma_uX.$ This $\gamma_uX$ will be called the uniform compactification of $X$. For details, we refer to \cite[Chapter 21]{key-22}.
\begin{lem}\label{2}
 Let $x\in [1, \infty)$ and $q\in U$, where $U$ is the uniform compactification of $([0, \infty), +)$. Then the equation $x+q=q$ can not hold.
\end{lem}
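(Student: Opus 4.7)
The plan is to argue by contradiction, exploiting the fact that bounded uniformly continuous real-valued functions on $[0,\infty)$ extend continuously to $U$ and that this extension respects pointwise algebraic operations. Suppose toward a contradiction that $x+q=q$ in $U$. The left translation $\lambda_{x}:[0,\infty)\to[0,\infty)$, $\lambda_{x}(y)=x+y$, is an isometry, hence uniformly continuous, so by the universal property of the uniform compactification it extends to a continuous map $\widetilde{\lambda}_{x}:U\to U$; by the construction of the semigroup operation on $U$, $x+q$ is precisely $\widetilde{\lambda}_{x}(q)$. Hence for every bounded uniformly continuous $h:[0,\infty)\to\mathbb{R}$, the extension $\widetilde{h}:U\to\mathbb{R}$ satisfies $\widetilde{h\circ\lambda_{x}}(q)=\widetilde{h}\bigl(\widetilde{\lambda}_{x}(q)\bigr)=\widetilde{h}(q)$.

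Next I would pick two test functions tuned to the given $x$. Set
\[
f(y)=\sin\!\Bigl(\tfrac{\pi y}{x}\Bigr),\qquad g(y)=\cos\!\Bigl(\tfrac{\pi y}{x}\Bigr).
\]
Each is bounded by $1$ and Lipschitz with constant $\pi/x$, hence bounded and uniformly continuous on $[0,\infty)$, and therefore extends to continuous functions $\widetilde{f},\widetilde{g}:U\to\mathbb{R}$. A direct computation, based on the phase shifts $\sin(\theta+\pi)=-\sin\theta$ and $\cos(\theta+\pi)=-\cos\theta$, yields $f\circ\lambda_{x}=-f$ and $g\circ\lambda_{x}=-g$. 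Combining these identities with the displayed property from the previous paragraph gives $\widetilde{f}(q)=-\widetilde{f}(q)$ and $\widetilde{g}(q)=-\widetilde{g}(q)$, so $\widetilde{f}(q)=\widetilde{g}(q)=0$.

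The final step invokes the Pythagorean identity $f^{2}+g^{2}\equiv 1$. I would cite the standard fact that the extension operation is an algebra homomorphism from the bounded uniformly continuous functions on $[0,\infty)$ into $C(U,\mathbb{R})$, meaning $\widetilde{h_{1}h_{2}}=\widetilde{h_{1}}\cdot\widetilde{h_{2}}$ and $\widetilde{h_{1}+h_{2}}=\widetilde{h_{1}}+\widetilde{h_{2}}$; both sides are continuous on $U$ and agree on the dense subset $[0,\infty)$, forcing equality. Applying this with $h_{1}=h_{2}=f$, then with $h_{1}=h_{2}=g$, and summing, I obtain
\[
0\;=\;\widetilde{f}(q)^{2}+\widetilde{g}(q)^{2}\;=\;\widetilde{f^{2}+g^{2}}(q)\;=\;\widetilde{1}(q)\;=\;1,
\]
the desired contradiction.

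The only obstacle is the bookkeeping around the extension map: verifying that it respects products and sums and that the semigroup product $x+q$ coincides with $\widetilde{\lambda}_{x}(q)$. Both are routine consequences of the universal property of the uniform compactification. The substantive idea is the choice of the pair $\bigl(\sin(\pi y/x),\cos(\pi y/x)\bigr)$, whose period $2x$ forces $x$-translation to negate them, while $\sin^{2}+\cos^{2}\equiv 1$ prevents both extensions from vanishing simultaneously at any point of $U$.
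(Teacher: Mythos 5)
Your proof is correct. It is, at bottom, the same idea as the paper's: both arguments detect the translation by $x$ with a periodic character of $([0,\infty),+)$. The paper takes $\varphi(t)=\exp(2\pi i t/n)$ for an integer $n>x$, invokes the fact that a uniformly continuous homomorphism into the compact group $\mathbb{T}$ extends to a continuous homomorphism $\widetilde{\varphi}:U\to\mathbb{T}$, and cancels $\widetilde{\varphi}(q)$ in the group $\mathbb{T}$ to get $\varphi(x)=1$, a contradiction. Your pair $\bigl(\sin(\pi y/x),\cos(\pi y/x)\bigr)$ is just the real and imaginary parts of the character $e^{i\pi y/x}$, tuned so that its value at $x$ is $-1$; your Pythagorean step $\widetilde{f}(q)^{2}+\widetilde{g}(q)^{2}=1$ plays exactly the role of the paper's observation that $\widetilde{\varphi}(q)$ is a unit, hence cancellable. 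What your version buys is self-containedness: you never need the theorem that uniformly continuous homomorphisms into compact groups extend to homomorphisms of the uniform compactification, only the defining extension property for bounded uniformly continuous real functions together with the (density-argument) facts that the extension respects sums, products, and composition with $\lambda_x$, and that $x+q=\widetilde{\lambda}_x(q)$. The paper's version is shorter and generalizes immediately to ruling out $x+q=q$ for any $x>0$ via a single character; yours does too, but at the cost of the extra bookkeeping you correctly flag. Both are complete proofs.
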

\begin{proof}
 Choose an integer $n>x$. Now the map $\varphi:[0, \infty)\longrightarrow\mathbb{T}$ given by $\varphi(t)=exp \frac{2\pi it}{n}$ is a uniformly continuous homomorphism, where $\mathbb{T}=\{z\in \mathbb{C}: \Vert z \Vert=1\}$ is the compact circle group. So it extends to a continuous homomorphism $\widetilde{\varphi}:U\longrightarrow\mathbb{T}$. Now if $x+q=q$, then $\varphi(x)=1$, a contradiction.
\end{proof}
If $q$ is an idempotent in $(\beta \mathbb{N}, \star_{n})$ for some $n\in \N$, then $q=n^{q}\cdot q$. This implies that $q\in \mathbb{H}_n$, where $\mathbb{H}_n=\bigcap_{m\in \mathbb{N}}\overline{n^m\mathbb{N}}$ is a subsemigroup of $(\beta \mathbb{N}, +)$.

\begin{lem}\label{wow}
 If $p$ is an idempotent in  $(\beta \mathbb{N}, \star_{n})$ for some $n\in \N_{>1}$, then there is no $u\in \beta \mathbb{N}$ for which $p=u+ p$. In addition, for any $p,q \text{ and }u\in \bN$, the equation $q\star_{n} p=p=u+p$ is not solvable.
\end{lem}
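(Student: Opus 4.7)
The plan is to transport both hypotheses into the uniform compactification $U = \gamma_u([0,\infty))$ of $([0,\infty), +)$ and then exploit Lemma \ref{2} together with the structural properties of $U$. Two continuous semigroup homomorphisms $\bN \to U$ are the key tools. First, the inclusion $\N \hookrightarrow [0,\infty)$ extends to a continuous semigroup homomorphism $\iota : (\bN, +) \to (U, +)$. Second, the map $\log_n : \N \to [0,\infty)$ is uniformly continuous and satisfies $\log_n(xy) = \log_n x + \log_n y$, so its continuous extension is a semigroup homomorphism $L : (\bN, \cdot) \to (U, +)$. The compatibility $\log_n(n^x) = x$ for $x \in \N$ then extends to the key identity $L(n^y) = \iota(y)$ for every $y \in \bN$.

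Suppose, for contradiction, that $p, q, u \in \bN$ satisfy $q \star_n p = p = u + p$. Applying $L$ to $q \star_n p = n^q \cdot p = p$ and $\iota$ to $u + p = p$ produces two identities in $(U, +)$:
\[
\iota(q) + L(p) = L(p), \qquad \iota(u) + \iota(p) = \iota(p).
\]
If either $u$ or $q$ were a principal ultrafilter, its $\iota$-image would lie in $[1, \infty) \subseteq U$, and Lemma \ref{2} would yield an immediate contradiction. Hence both $u$ and $q$ are non-principal.

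For the non-principal case, iterating $u + p = p$ gives $ku + p = p$ for every $k \in \N$, so the closed subsemigroup $C \subseteq (U, +)$ generated by $\iota(u)$ lies entirely in the left stabilizer of $\iota(p)$; by Ellis' theorem $C$ contains an additive idempotent $e$ which also stabilizes $\iota(p)$, and an entirely parallel analysis starting from $\iota(q) + L(p) = L(p)$ furnishes an idempotent $e'$ stabilizing $L(p)$. The main obstacle is to convert these stabilizer properties into an outright contradiction. The plan is to enrich Lemma \ref{2} using the whole family of uniformly continuous characters $\varphi_m(t) = e^{2\pi i t/m}$ for $m \in \N$: each extends to a continuous character $\widetilde{\varphi_m} : (U,+) \to \mathbb{T}$, and applying these to $\iota(u) + \iota(p) = \iota(p)$ forces $\widetilde{\varphi_m}(\iota(u)) = 1$ for every $m$, while the multiplicative identity $\iota(q) + L(p) = L(p)$ together with $L(n^y) = \iota(y)$ imposes a further rigidity on $\iota(u)$ incompatible with $u$ being any nonzero element of $\bN$. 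I anticipate this last step—converting the collection of character-kernel conditions into a genuine impossibility statement—to be the main technical hurdle, and the presumed third lemma announced in the paper is likely doing precisely this.
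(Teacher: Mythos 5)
Your setup (extending $\log_n$ to a continuous map into the uniform compactification $U$ and using Lemma \ref{2} to dispose of the principal cases) matches the paper's starting point, but the argument stops exactly where the real work begins, and the route you sketch for the non-principal case cannot be completed. From $\iota(u)+\iota(p)=\iota(p)$ with $u$ non-principal, the characters $\varphi_m$ yield only the conditions $\widetilde{\varphi_m}(\iota(u))=1$ for every $m$; these are satisfied by many non-principal ultrafilters (for instance any $u\in\bigcap_{m}\overline{m!\,\N}$, in particular every additive idempotent), so no amount of character bookkeeping can turn them into a contradiction. Likewise the Ellis-idempotent step only reconfirms that the (closed) stabilizer of a point of $U$ contains idempotents, which is true and harmless. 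The two identities $\iota(q)+L(p)=L(p)$ and $\iota(u)+\iota(p)=\iota(p)$ are each individually consistent in $(U,+)$; the contradiction has to come from playing the multiplicative and additive stabilizations of the \emph{same} ultrafilter $p$ against each other inside $\bN$, not from their separate images in $U$.

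The paper's mechanism for doing this is the idea your proposal is missing. One first shows, via the finite quotient homomorphisms onto $\Z_{n^m}$, that at most one $m\in\N$ can satisfy $m+p\in\mathbb{H}_n=\bigcap_{k}\overline{n^k\N}$. Since $p=n^q\cdot p=u+p$, the point $p$ lies in the closure of both countable sets $\{n^m\cdot p:m\in\N\}$ and $\{m+p:m\in\N,\ m+p\notin\mathbb{H}_n\}$, so \cite[Corollary 3.42]{key-22} forces an identity $n^{x}\cdot p=m+p$ or $n^{m}\cdot p=x+p$ with a \emph{principal} $m\in\N$. The first is impossible because $n^{x}\cdot p\in\mathbb{H}_n$ while $m+p\notin\mathbb{H}_n$; the second is impossible because applying $f$ (your $L$), and using $f(x+y)=f(y)$ for $y\in\N^{*}$, gives $m+f(p)=f(p)$, contradicting Lemma \ref{2}. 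It is this reduction to a principal translate --- not a richer supply of characters --- that closes the argument, and your proposal contains no substitute for it.
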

\begin{proof}
 Let $U$ be the uniform compactification of $([0, \infty), +)$ and $f:\beta \mathbb{N}\longrightarrow U$ be the continuous extension to $\beta\N$ of the function $\log_{n}:\N\rightarrow [0,\infty)$. $f$ has the property that $f(x\cdot y)=f(x)+f(y)$ for every $x, y\in \beta \mathbb{N}$ and $f(x+y)=f(y)$ for every $x, y\in \beta \mathbb{N}$ and for every $y\in\mathbb{N^*}=\beta \mathbb{N}\setminus \mathbb{N}$.
 By contrast, assume that $p=q\star_{n} p=u+p$ for some $u\in\bN$.

 {\bfseries Claim:} There exists at most one $m\in\N$ such that $m+ p\in \mathbb{H}_n$. 
 
 To prove the claim, let $a+ p, b+ p\in \mathbb{H}_n$ for distinct $a, b\in \mathbb{N}$ and choose $m\in \mathbb{N}$ such that $n^m>\max\{a, b\}$. Let $h:\beta \mathbb{N}\longrightarrow \mathbb{Z}_{n^m}$ denote the extension to $\bN$ of the natural homomorphism that maps $\mathbb{N}$ onto $\mathbb{Z}_{n^m}$. Then $h(a)+h(p)=h(b)+h(p)$. So $h(a)=h(b)$, which implies that $n^m|a-b$, a contradiction.
  Having proven the Claim, now observe that $p\in cl\{n^m\cdot p\mid  m\in \mathbb{N}\}\cap cl\{m+ p\mid  m\in \mathbb{N}, m+p\notin \mathbb{H}_n\}$, because $p=n^q\cdot p=u+p$ for some $q,u\in \bN$. Hence from \cite[Corollary 3.42]{key-22}, 
 there exists $x\in \beta \mathbb{N}$ and $m\in \mathbb{N}$ with $m+p\notin \mathbb{H}_n$, such that $n^{x}\cdot p=m+p$ or, $n^{m}\cdot p=x+p$. The first possibility cannot hold because $m+p\notin \mathbb{H}_n$ but $n^{x}\cdot p\in \mathbb{H}_n$. The second possibility also cannot hold because it implies that $m+f(p)=f(p)$,  which makes a contradiction in Lemma \ref{2}.
 

\end{proof}
The following is a basic observation that we prove in detail for completeness.

\begin{lem}\label{4}
 If $p\in K(\beta \mathbb{N}, +)$, then $u+p=p$ for some $u\in K(\beta \mathbb{N}, +)$.
\end{lem}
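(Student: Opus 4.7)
The plan is to exploit the structure of minimal left ideals of $(\beta\N,+)$. Since $K(\beta\N,+)$ is by definition the union of all minimal left ideals, the assumption $p\in K(\beta\N,+)$ gives a minimal left ideal $L$ of $(\beta\N,+)$ with $p\in L$. I would then show that the set $L+p=\{v+p:v\in L\}$ is itself a left ideal of $\beta\N$ contained in $L$, and use minimality to conclude $L+p=L$. This will immediately yield the required $u$.

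First I would verify the containment $L+p\subseteq L$: for any $v\in L$, viewing $v$ as an element of $\beta\N$ and $p\in L$, the left-ideal property gives $v+p\in\beta\N+L\subseteq L$. Next I would check that $L+p$ is itself a left ideal of $(\beta\N,+)$: for any $q\in\beta\N$ and $v+p\in L+p$, associativity yields $q+(v+p)=(q+v)+p$, and since $L$ is a left ideal we have $q+v\in L$, so $(q+v)+p\in L+p$.

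Hence $L+p$ is a nonempty left ideal of $\beta\N$ contained in the minimal left ideal $L$. By minimality $L+p=L$. In particular, $p\in L=L+p$, so there exists $u\in L$ with $u+p=p$, and since $L\subseteq K(\beta\N,+)$ this $u$ lies in $K(\beta\N,+)$, as required.

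There is no serious obstacle in this argument; the only point to be careful about is distinguishing the roles of $L$ as a subset and as a left ideal so as to correctly apply $\beta\N+L\subseteq L$ in both verifications. One could alternatively invoke Ellis's theorem to get an idempotent $e\in L$ and then argue via the group structure of $L\cap R$ (where $R$ is a minimal right ideal through $p$) that $e+p=p$, but the direct minimality argument above is shorter and avoids appealing to the full completely simple structure of $K(\beta\N,+)$.
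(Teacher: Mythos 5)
Your argument is identical to the paper's: both locate $p$ in a minimal left ideal $L$, observe that $L+p$ is a left ideal contained in $L$, invoke minimality to get $L+p=L$, and extract $u\in L\subseteq K(\beta\N,+)$ with $u+p=p$. The proof is correct and complete.
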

\begin{proof}
 Choose a minimal left ideal $L$ of $(\beta \mathbb{N}, +)$ such that $p\in L$. Then $L\subseteq  K(\beta \mathbb{N}, +)$. Now $L+ p$ is also a left ideal of $(\beta \mathbb{N}, + )$ and $L+ p\subseteq L$. By the minimality of $L$, we have that $L+ p=L$. Thus $p\in L+ p$. Take $u\in L\subseteq  K(\beta \mathbb{N}, +)$ such that $p=u+ p$, as required.
\end{proof}

The three Lemmas above allow to deduce that $\star_{n}$-idempotents, if they exist, cannot belong to $K\left(\beta\N,+\right)$.


\begin{proof}[Proof of Theorem \ref{5}]
 If $p\in K(\beta \mathbb{N}, +)$, then there exists $u\in K(\beta \mathbb{N}, +)$ such that  $u+ p=p$ by Lemma \ref{4}. Now by Lemma \ref{wow}, $p$ can not be an idempotent of  $(\beta \mathbb{N}, \star_{n})$.

  If $p\in E(\beta \mathbb{N}, +)$, then $p+ p=p.$ Now by Lemma \ref{wow}, $p$ can not be an idempotent of  $(\beta \mathbb{N}, \star_{n})$.
\end{proof}

Putting together the results of this section, we have the limitation that $\star_{n}$-idempotents (for any $n\in \N_{>1}$), if they exist, must belong to $\mathbb{H}_n\setminus \left(K(\beta \mathbb{N}, +)\cup E(\beta \mathbb{N}, +)\right)$.


\section{Further Combinatorial Applications}\label{3}

\subsection{Monochromatic Solutions of Exponential Equations}

In \cite{comb}, P. Csikv\'{a}ri, K. Gyarmati, and A. S\'{a}rk\"{o}zy conjectured that the equation $a+b=c\cdot d$ is partition regular. In \cite{pissa, h2}, V. Bergelson and N. Hindman independently answered this conjecture. Here we address the analog version of this result in an exponential setting. We show how Theorem \ref{main1} implies the exponential version of this theorem.

\begin{thm}\label{trivial}
     The equation $xy=a^{b}$ is partition regular. 
     More generally, every multiplicative piecewise syndetic set contains solutions of this equation.
\end{thm}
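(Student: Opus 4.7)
The plan is to prove Theorem \ref{trivial} by a direct appeal to Theorem \ref{main1}, with the base $n$ chosen from inside the set $A$ itself. Let $A \subseteq \N$ be multiplicatively piecewise syndetic. Since $A$ is infinite, I first pick some $n \in A$ with $n > 1$; this is legitimate because mult.\ PWS sets cannot be cofinite in $\{1\}$. Applying Theorem \ref{main1} to $A$ with this base $n$ produces $x, y \in A$ such that $z := x n^{y} \in A$. At this stage I have four monochromatic elements $n, x, y, z \in A$ tied together by the single relation $z = x \cdot n^{y}$.

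The next step is to recognize this configuration as a solution to $XY = a^{b}$ with $X, Y, a, b \in A$. The natural identification is to set $a := n \in A$ and $b := y \in A$, so that $a^{b} = n^{y}$. The relation $z = x \cdot n^{y}$ then says that the element $z \in A$ factors in $\N$ as a product of $x \in A$ with the power $a^{b}$. To produce an honest equation $XY = a^{b}$, I would pick $X := x$ and need $Y \in A$ with $XY = n^{y}$, which reduces to locating a specific factor of $a^{b}$ inside $A$. Alternatively, running Theorem \ref{main1} a second time with base $n$ (using the ultrafilter/central-set freedom in its proof to pin down $x$ to be itself a power of $n$, e.g.\ to lie in the monochromatic multiplicative subsemigroup generated by $n$) forces $z = x n^{y}$ to be a true power $n^{y+k}$ of $n \in A$; then $z$ factors as $n \cdot n^{y+k-1}$, giving $X = n$, $Y = n^{y+k-1}$, $a = n$, $b = y+k$.

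The main obstacle is precisely this matching problem: Theorem \ref{main1} cheaply yields the asymmetric relation (element)$\cdot$(power)$\in A$, whereas the target equation $XY = a^{b}$ asks for (element)$\cdot$(element)$= $(power), with all four factors monochromatic. I expect to close the gap by iterating Theorem \ref{main1}, using one application to produce the power side $a^{b}$ and a second to produce a factorization $X\cdot Y$ of the same element inside $A$; the bookkeeping that both applications return consistent numerical values is the delicate point, and is most cleanly handled via a minimal idempotent $p \in K(\beta\N, \cdot)$ that contains $A$, combined with the operator $\star_{n}$ introduced in the paper.
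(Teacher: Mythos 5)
Your proposal correctly isolates the obstacle --- Theorem \ref{main1} hands you the asymmetric relation $x n^{y}\in A$, while the target $XY=a^{b}$ needs a genuine two-element factorization of a perfect power inside $A$ --- but it does not overcome it, and neither of the suggested repairs works. There is no ``ultrafilter/central-set freedom'' in Theorem \ref{main1} that lets you force the witness $x$ to be a power of the base $n$; and even if $z=xn^{y}$ happened to equal $n^{y+k}$, you would still need the cofactor $n^{y+k-1}$ to lie in $A$, which nothing guarantees. Iterating Theorem \ref{main1} only ever produces more patterns of the shape $(\text{element})\cdot n^{(\text{element})}\in A$; it never produces a factorization of $a^{b}$ into two monochromatic factors, so the ``delicate bookkeeping'' you defer to the end is in fact the entire content of the theorem.

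The idea you are missing is a change of variables that linearizes the product. Writing $x=2^{X}$, $y=2^{Y}$, $a=2^{A'}$, the equation $xy=a^{b}$ becomes the additive--exponential identity $X+Y=A'\cdot b$ among the exponents, and this identity can be manufactured from an additive Schur triple dilated by a Sahasrabudhe--Schur pattern. Concretely, the paper takes $p\in K(\beta\N,\cdot)$ with $A\in p$; the return set $S=\{s:s^{-1}A\in p\}$ is multiplicatively syndetic and hence contains a Schur triple $a_{1},b_{1},a_{1}+b_{1}$, so $B=A\cap a_{1}^{-1}A\cap b_{1}^{-1}A\cap(a_{1}+b_{1})^{-1}A\in p$ is still multiplicatively piecewise syndetic. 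Theorem \ref{main1} applied to $B$ with base $2$ gives $c,d,c2^{d}\in B$, whence $a_{1}c2^{d}+b_{1}c2^{d}=\bigl((a_{1}+b_{1})c\bigr)\cdot 2^{d}$ with all of $a_{1}c2^{d}$, $b_{1}c2^{d}$, $(a_{1}+b_{1})c$, $d$ in $A$; exponentiating base $2$ yields $x=2^{a_{1}c2^{d}}$, $y=2^{b_{1}c2^{d}}$, $a=2^{(a_{1}+b_{1})c}$, $b=2^{d}$ satisfying $xy=a^{b}$. (The solution thus lives in $2^{A}$, and partition regularity follows via the induced $\log_{2}$ coloring.) Without this Schur-triple-plus-exponentiation step your argument cannot be completed.
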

     
\begin{proof}
Choose any $p\in K(\beta\N,\cdot)$ and $A\in p$. Then by \cite[Theorem 4.39]{key-22}, the set $S=\{x\in \N:x^{-1}A\in p\}$ is syndetic. Hence there exists $a_1,b_1\in \N$ such that $\{a_1,b_1,a_1+b_1\}\subset S.$ Hence $B=A\cap \bigcap_{x\in \{a_1,b_1,a_1+b_1\}}x^{-1}A\in p.$ Now from Theorem \ref{main1}, there exist $\{c,d,c2^d\}\subset B.$ Hence $(a_1+b_1)c2^d=a_1c2^d+b_1c2^d\in A.$ 
Define
\begin{itemize}
    \item $a=2^{(a+b)\cdot c};$
    \item $b=2^d;$
    \item $x=2^{a_1c2^d}$ and $y=2^{b_1c2^d}.$
\end{itemize}
Then clearly $\{a,b,x,y\}\subset A,$ and $xy=a^{b}.$
\end{proof}

An immediate question appears if there exists a finitary extension of Theorem \ref{trivial}. In \cite{partition, h2}, it has been proved that for every $m,n\in \N,$ the equation $x_1+\cdots +x_n=y_1\cdots y_m$ is partition regular. Now we prove the following theorem which is a finitary extension of Theorem \ref{trivial}.
\begin{thm}\label{final}
    For every $m,n\in \N,$ the equation $x_n^{x_{n-1}^{\cdot^{\cdot^{\cdot^{x_1}}}}}=y_1\cdots y_m$ is partition regular. More generally, every multiplicative piecewise syndetic set contains solutions of this equation.
\end{thm}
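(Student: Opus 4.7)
The plan is to extend the strategy of Theorem \ref{trivial} by combining an $m$-term additive Schur-type pattern with a nested exponential tower of height $n-1$. After the $\log_2$-reduction used throughout this paper, it suffices to show that every multiplicatively piecewise syndetic set $A \subseteq \N$ contains elements $X_1, \ldots, X_n, Y_1, \ldots, Y_m$ with
$$Y_1 + Y_2 + \cdots + Y_m \;=\; X_n \cdot 2^{\,X_{n-1} \cdot 2^{\,X_{n-2} \cdot 2^{\cdot^{\cdot^{\cdot^{X_1}}}}}},$$
since setting $x_i = 2^{X_i}$ and $y_j = 2^{Y_j}$ then recovers a monochromatic solution of the original equation.

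First I would fix a minimal multiplicative idempotent $p \in K(\bN,\cdot) \cap E(\bN,\cdot)$ with $A \in p$, set $S = \{x \in \N : x^{-1} A \in p\}$, and let $A^\ast := A \cap S \in p$. Since $A^\ast$ is multiplicatively central it is additively central as well, and in particular contains arbitrarily long additive $IP$-sets (the same standard fact used implicitly in Theorem \ref{trivial} to pull $\{a_1, b_1, a_1+b_1\}$ out of $S$). I would choose $a_1, \ldots, a_m \in A^\ast$ with $\FS(\langle a_i\rangle_{i=1}^m) \subseteq A^\ast$, put $\sigma := a_1 + \cdots + a_m$, and set
$$D \;:=\; A^\ast \cap \bigcap_{\emptyset \neq F \subseteq [m]} \Bigl(\textstyle\sum_{i\in F} a_i\Bigr)^{-1} A \;\in\; p,$$
which is again multiplicatively piecewise syndetic and satisfies $D \subseteq a_j^{-1} A$ for every $j$ as well as $D \subseteq \sigma^{-1} A$.

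The second step is to manufacture the tower inside $D$: find $c, X_1, \ldots, X_{n-1} \in D$ with $P := c \cdot 2^{\,X_{n-1} \cdot 2^{\cdots 2^{X_1}}} \in D$. For $n = 2$ this is exactly Theorem \ref{main1} applied to $D$, with $X_1 = d$ and $c, c \cdot 2^d \in D$. For general $n$ I would iterate the polynomial van der Waerden argument behind Theorem \ref{main2}, choosing the sequence of bounding functions $f_k$ to grow as iterated exponentials --- the same device used in \cite{key-6} to derive Theorem \ref{important} from Theorem \ref{DiR} --- so that the degree-one polynomials $p_k(x) = \alpha_k x$ with enormous coefficients $\alpha_k = 2^{X_{k-1} \cdot 2^{\cdots 2^{X_1}}}$ are admissible, producing the required nested exponential. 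Then I would set $X_n := \sigma c$ and $Y_j := a_j P$; the verifications are immediate: $X_n \in A$ because $c \in D \subseteq \sigma^{-1} A$; each $Y_j \in A$ because $P \in D \subseteq a_j^{-1} A$; each $X_i \in D \subseteq A$ for $i \leq n-1$; and
$$\sum_{j=1}^m Y_j \;=\; \Bigl(\sum_j a_j\Bigr) P \;=\; \sigma \cdot c \cdot 2^{X_{n-1} \cdot 2^{\cdots}} \;=\; X_n \cdot 2^{X_{n-1} \cdot 2^{\cdots}}.$$

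The hard part will be this tower construction. Theorem \ref{main2} is stated for members of the product ultrafilter $p_0 \star_2 q_0$, and it is not automatic that a given multiplicatively piecewise syndetic set $D$ lies in such an ultrafilter. The cleanest workaround is to rerun the proof of Theorem \ref{main2} \emph{inside} $D$: at each inductive stage one has a multiplicatively central subset $D_k \in p$, which (being multiplicatively central and hence additively central) is additively piecewise syndetic and therefore amenable to Theorem \ref{n}; after selecting the next tower layer one shrinks $D_k$ to a smaller $p$-member $D_{k+1}$ so that all partial towers built so far remain inside. This inductive bookkeeping mirrors the proof of Theorem \ref{main2} but is carried out inside the multiplicative-idempotent filter rather than the additive one.
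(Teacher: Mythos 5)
Your overall architecture is the same as the paper's: reduce via base-$2$ logarithms, pull an additive $FS(\langle a_i\rangle_{i=1}^m)$ out of the multiplicative return set $S=\{x:x^{-1}A\in p\}$, build one exponential tower inside the shrunken set $D$, and then recombine by setting $X_n=\sigma c$ and $Y_j=a_jP$ exactly as in Theorem \ref{trivial}. That part of your write-up is correct, and you deserve credit for noticing the point the paper's own proof glosses over: Theorem \ref{main2} is stated only for members of ultrafilters of the form $p\star_n q$, so it does not apply off the shelf to a multiplicatively piecewise syndetic set. The paper closes this gap with Lemmas \ref{dilation}--\ref{surprise}: a uniformly recurrent point $q$ of the dynamical system $(L^m,\langle T_s\rangle_s)$ on a minimal multiplicative left ideal satisfies $p\star_n q=q$ for some additive minimal idempotent $p$ (Lemma \ref{multhick}), and the class $\mathcal{DR}$ of ultrafilters witnessing Theorem \ref{main2} is a left ideal of $(\bN,\cdot)$ (Lemma \ref{dilation}), whence $K(\bN,\cdot)\subseteq\mathcal{DR}$.

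Your substitute for that step is the genuine gap. ``Rerunning the proof of Theorem \ref{main2} inside the multiplicative-idempotent filter'' does not work as described: that proof uses in an essential way that $p$ is an \emph{additive} idempotent, so that $a_1\in B^\star$ forces $-a_1+B^\star\in p$ and the additive translates of the exponent set stay in the filter; it also uses a second ultrafilter $q$ to select the multipliers $n_k$. Neither mechanism is available inside a single multiplicative idempotent, and you give no indication of what replaces them. Fortunately the gap is reparable, and more cheaply than by either your route or the paper's: for the finitary single tower you actually need, pick $X_1,\dots,X_{n-2}\in D$ arbitrarily, form $T_{n-2}=X_{n-2}2^{X_{n-3}2^{\cdots 2^{X_1}}}$, and apply Theorem \ref{main1} to $D$ with the base $b=2^{2^{T_{n-2}}}$; this yields $X_{n-1},c\in D$ with $c\cdot b^{X_{n-1}}=c\cdot 2^{X_{n-1}2^{T_{n-2}}}\in D$, which is exactly your $P$. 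With that substitution your recombination step goes through verbatim. One further caveat, which you share with the paper's Theorem \ref{trivial}: the $\log_2$-reduction proves partition regularity and proves that $2^A$ contains solutions when $A$ is multiplicatively piecewise syndetic, but it does not literally place the solution inside $A$ itself, so the ``more generally'' clause should be read in the sense made precise in Lemma \ref{surprise} (as a statement about members of $n^p$).
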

The proof goes through some technical lemmas, where first we prove that every multiplicative piecewise syndetic set witness Theorem \ref{main2}, then proceeding similarly to the proof of Theorem \ref{trivial} we prove our result.

Let $\mathcal{DR}$ be the collection of those ultrafilters $p$ such that each member of $p$ satisfies the conclusion of theorem \ref{main2}, i.e. $$\mathcal{DR}=\{p:p\text{ satisfies conclusion of theorem }\ref{main2}\}.$$
The following lemma shows that the set $\mathcal{DR}$ is a left ideal of $(\bN,\cdot).$
\begin{lem}\label{dilation}
    Let $p\in \mathcal{DR}$. Then for every $q\in \bN$, we have $q\cdot p\in \mathcal{DR}.$
    
\end{lem}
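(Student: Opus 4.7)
The plan is to use the identity $B \in q \cdot p \iff \{x \in \N : x^{-1} B \in p\} \in q$: given $A \in q \cdot p$, the set $T_A := \{x \in \N : x^{-1} A \in p\}$ is a member of $q$, hence non-empty, so I pick any $x \in T_A$. Since $p \in \mathcal{DR}$, the set $x^{-1} A$ satisfies the conclusion of Theorem \ref{main2} for any input parameters. The strategy is to choose tailored parameters $F'_1, \varPhi'$ so that the pattern produced inside $x^{-1} A$, once dilated by the factor $x$, becomes exactly the required $\mathcal{PF}^n_{k, \varPhi}$-pattern inside $A$ for the originally requested $F_1, \varPhi$.

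The only technical ingredient is a polynomial rescaling trick: for any $P \in \mathbb{P}$, the polynomial $Q(t) := P(xt)$ also lies in $\mathbb{P}$ (no constant term is introduced), with $\deg Q = \deg P$ and $coef(Q) \leq x^{\deg P} \cdot coef(P)$. Accordingly, I set
\[
F'_1 := \{\, Q \in \mathbb{P} : Q(t) = P(xt) \text{ for some } P \in F_1 \,\}, \qquad f'_n(t) := x^{f_n(xt)} \cdot f_n(xt),
\]
and $\varPhi' := (f'_n)_{n \geq 2}$. Then $F'_1 \in \mathcal{P}_f(\mathbb{P})$ and each $f'_n$ is a function $\N \to \N$; the key observation, immediate from the rescaling bound, is that for every index $i$ and every $s \in \N$, whenever $P \in F_{i, f_i(xs)}$ the polynomial $Q(t) := P(xt)$ belongs to $F_{i, f'_i(s)}$, and similarly $P \in F_1$ gives $Q \in F'_1$.

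Next, I apply the $\mathcal{DR}$-property of $p$ to $x^{-1} A$ with parameters $F'_1, \varPhi'$, obtaining a sequence $(x_k)$ with $\mathcal{PF}^n_{k, \varPhi'}(x_k) \subset x^{-1} A$. Setting $y_k := x \cdot x_k$, every admissible element of $\mathcal{PF}^n_{k, \varPhi}(y_k)$ factors as
\[
y_k \cdot n^{\sum_{i=1}^{k-1} p_i(y_i)} = x \cdot \bigl(x_k \cdot n^{\sum_{i=1}^{k-1} q_i(x_i)}\bigr), \qquad q_i(t) := p_i(xt),
\]
and by the preceding paragraph the $q_i$'s form an admissible polynomial choice for $\mathcal{PF}^n_{k, \varPhi'}(x_k)$, so the bracketed factor lies in $x^{-1} A$. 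Hence $y_k \cdot n^{\sum p_i(y_i)} \in A$, giving $\mathcal{PF}^n_{k, \varPhi}(y_k) \subset A$; since $F_1$ and $\varPhi$ were arbitrary, $q \cdot p \in \mathcal{DR}$.

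The main obstacle is really only bookkeeping---tracking how polynomial degrees and coefficients behave under $t \mapsto xt$, and choosing $f'_n$ large enough to absorb the $x^{f_n(xt)}$ blow-up. No tool beyond the definition of $q \cdot p$ and this substitution trick is needed.
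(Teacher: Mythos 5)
Your proof is correct and follows essentially the same route as the paper's: pick $x$ with $x^{-1}A\in p$, rescale the polynomial data under $t\mapsto xt$, apply the $\mathcal{DR}$-property of $p$ to $x^{-1}A$, and dilate back by $x$. In fact your bookkeeping is more careful than the paper's, which takes $g_m(t)=f_m(at)$ and thereby ignores the factor $a^{\deg P}$ by which the coefficients of $P(at)$ can exceed those of $P$; your choice $f'_n(t)=x^{f_n(xt)}\cdot f_n(xt)$ absorbs exactly this blow-up, so the rescaled polynomials genuinely land in the required families $F_{i,\cdot}$.
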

\begin{proof}
    If $A\in q\cdot p$, then $B= \{m':\{n':m'n'\in A\}\in p\}\in q.$ Let $a\in B$, and so $a^{-1}A\in p.$ Let $F_1$ be a finite set of polynomials with no constant term and  $\varPhi=\left(f_{n}:n\geq2\right)$ be a given sequence of functions. Let $F_{n,x}$
be a finite collection of polynomials with no constant term and having
the degree and the absolute value of each of the coefficients is less
or equal to $f_{n}\left(x\right).$

Let $G_1\in \mathcal{P}_f(\mathbb{P})$  defined as follows: 
\[g\in G_1 \Leftrightarrow \exists f\in F_1 \ \forall x\in\N \ g(x)=f(ax).\]
Define a new sequence of functions $\varPhi^{\star}=\left(g_{m}:m\geq 2\right)$ by $g_m(x)=f_m(ax)$ for all $x\in \N$. As $a^{-1}A\in  p,$ we have a sequence $\langle x_n \rangle_{n=1}^\infty$, such that 
\[
\mathcal{PF}^n_{k,\varPhi^\star}\left(x_{k}\right)_{k\in\mathbb{N}}=\left\{ x_{k}\cdot n^{\sum_{i=1}^{k-1}q_{i}\left(x_{i}\right)}:k\in\mathbb{N},q_{1}\in G_{1},\text{ and }q_{i}\in G_{i,g_{i}\left(x_{i-1}\right)}\text{ for }2\leq i\leq k-1\right\} \subset a^{-1} A.
\]
In other words,
\[
\mathcal{PF}^n_{k,\varPhi}\left((ax)_{k}\right)_{k\in\mathbb{N}}=\left\{ ax_{k}\cdot n^{\sum_{i=1}^{k-1}p_{i}\left(ax_{i}\right)}:k\in\mathbb{N},p_{1}\in F_{1},\text{ and }p_{i}\in F_{i,f_{i}\left(x_{i-1}\right)}\text{ for }2\leq i\leq k-1\right\} \subset A.
\]
This completes the proof.
\end{proof}

The next lemma needs some results from Topological dynamics.
Let $X$ be a compact space and $(S,\cdot)$ be a semigroup. 
For each $s\in S,$ let $T_s:X\rightarrow X$ be a continuous map and $T_{st}=T_s\circ T_t.$ Then $(X, \langle T_s\rangle_{s\in S})$ is called a dynamical system. A point $x\in X$ is called a uniformly recurrent point if for every open set $U\subseteq X,$ the return times set $\{s:T_sx\in U\}$ is a syndetic set.

\begin{lem}\label{multhick}
    Let $n\in \N_{>1}$, and $L^a$ and $L^m$ be both left ideals of $(\bN,+)$ and $(\bN,\cdot).$ Then there exist $p\in L^a$ and $q\in L^m$ such that $p\star_n q=q.$
\end{lem}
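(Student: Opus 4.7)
My plan is algebraic and rests on the observation that $f:\N\to\N$, $f(s)=n^s$, is a semigroup homomorphism from $(\N,+)$ to $(\N,\cdot)$. A standard Stone--\v Cech extension argument---using continuity of $\lambda_{n^s}$ on $(\beta\N,\cdot)$, right-topologicality of $(\beta\N,+)$, and continuity of right multiplication in $(\beta\N,\cdot)$---shows that the continuous extension $\tilde f:(\beta\N,+)\to(\beta\N,\cdot)$ of $f$ is also a semigroup homomorphism. In particular $\tilde f$ sends idempotents of $(\beta\N,+)$ to idempotents of $(\beta\N,\cdot)$, and one has $\tilde f(p)\cdot q=n^p\cdot q=p\star_n q$ for all $p,q\in\beta\N$.

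With this in hand, I would take a minimal left ideal $L^a_0\subseteq L^a$ of $(\beta\N,+)$ (which is closed, hence a compact right-topological subsemigroup) and use Ellis' theorem to pick an idempotent $p\in L^a_0\cap E(\beta\N,+)\subseteq L^a$. Then $\tilde f(p)$ is an idempotent of $(\beta\N,\cdot)$. Choose any $q_0\in L^m$ and set $q:=\tilde f(p)\cdot q_0$; since $L^m$ is a left ideal of $(\beta\N,\cdot)$ and $\tilde f(p)\in\beta\N$, we have $q\in L^m$. Using the idempotency of $\tilde f(p)$,
\[
p\star_n q \;=\; \tilde f(p)\cdot q \;=\; \tilde f(p)\cdot\tilde f(p)\cdot q_0 \;=\; \tilde f(p)\cdot q_0 \;=\; q,
\]
which exhibits the required pair $(p,q)\in L^a\times L^m$.

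The only step requiring a moment's thought is the verification that $\tilde f$ is a homomorphism; the rest is a clean application of Ellis' theorem combined with the trivial observation that every element of a left ideal is a left fixed point of any idempotent acting on it from the left. An alternative route staying closer to the dynamical framework foreshadowed in the paper would be to pick a uniformly recurrent point $q$ in $L^m$ for the action $T_s(q)=n^s q$ and, via the characterisation of syndetic sets as meeting every left-ideal closure together with a compactness/finite-intersection argument, extract $p\in L^a$ with $T_p q=q$; I prefer the algebraic version because it is shorter and sidesteps uniform recurrence altogether.
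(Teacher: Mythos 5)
Your proof is correct, but it takes a genuinely different route from the paper's. The paper treats $(\bN,\langle T_s\rangle_{s\in\N})$ with $T_s(q)=n^s\cdot q$ as a topological dynamical system, restricts it to the left ideal $L^m$, picks a uniformly recurrent point $q\in L^m$, and invokes Lemma 2.1 of Hindman--Strauss--Zamboni to produce $p\in L^a$ (indeed an idempotent) with $T_p(q)=q$. You instead argue purely algebraically: the continuous extension $\tilde f$ of the homomorphism $s\mapsto n^s$ from $(\N,+)$ to $(\N,\cdot)$ is again a homomorphism (this is Corollary 4.22 of Hindman--Strauss, and your continuity bookkeeping for it is the right one), so the idempotent $p$ extracted via Ellis' theorem from a minimal left ideal $L^a_0\subseteq L^a$ maps to a multiplicative idempotent $n^p=\tilde f(p)$, and $q:=n^p\cdot q_0$ for any $q_0\in L^m$ is then fixed under left multiplication by $n^p$, i.e.\ $p\star_n q=q$. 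Both arguments are sound. Yours is shorter, avoids uniform recurrence entirely, and still delivers the extra information the paper uses downstream (that $p$ may be taken in $E(K(\bN,+))$, since your $p$ lies in a minimal left ideal of $(\bN,+)$). What the dynamical proof buys is the explicit bridge to topological dynamics that the authors advertise in the introduction, and a $q$ that is a uniformly recurrent point rather than one manufactured from $p$; neither feature is needed for the lemma as stated. One small wording caveat: your closing gloss that ``every element of a left ideal is a left fixed point of any idempotent acting on it from the left'' is false as a general statement; what you actually use, and what is correct, is only the identity $e\cdot(e\cdot q_0)=e\cdot q_0$ for the specific element you construct.
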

\begin{proof}
     Fix $n\in \N_{>1}$. For each $s\in \N,$ let $T_s:\bN\rightarrow \bN$ be the continuous extension of the map $f_s:\N\rightarrow \N$ given by $f_s(t)=n^st$. Clearly $(\bN,\langle T_s\rangle_{s\in \N})$ is a topological dynamical system. Since $L^m$ is a left ideal of $(\bN,\cdot)$, we have $T_s[L^m]\subseteq L^m$ for all $s\in \N.$ Hence $(L^m,\langle T_s\rangle_{s\in \N})$ is also a topological dynamical system. Let $q$ be a uniformly recurrent point of the dynamical system $(L^m,\langle T_s\rangle_{s\in \N})$. From \cite[Lemma 2.1]{hsz}, there exists $p\in L^a$ such that $T_p(q)=p\star_n q=q.$ Also from \cite[Lemma 2.1]{hsz}, we can choose $p$ as an idempotent of $L^a.$
\end{proof}

The following Theorem shows that every multiplicative piecewise syndetic set contains the conclusion of the Theorem \ref{main2}, which will be later helpful to prove Theorem \ref{final}.
\begin{lem}\label{surprise}
    $\overline{K\left(\bN,\cdot\right)}\subseteq \mathcal{DR}.$ In other words, for every $n(>1)\in \N,$ and $p\in \overline{K\left(\bN,\cdot\right)},$ each member of $n^p$ contains Hindman tower of a sequence (the conclusion of Theorem \ref{important}).
\end{lem}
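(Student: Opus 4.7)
The plan is to identify $\mathcal{DR}$ as a closed left ideal of $(\bN,\cdot)$ that contains $K(\bN,\cdot)$; taking closure then upgrades $K(\bN,\cdot)\subseteq\mathcal{DR}$ to $\overline{K(\bN,\cdot)}\subseteq\mathcal{DR}$. The left ideal property is precisely Lemma \ref{dilation}. Closedness follows from a direct observation: for each $A\subseteq\N$ that fails to contain the pattern $\mathcal{PF}^n_{k,\varPhi}$, the set $\{p\in\bN:A\notin p\}=\bN\setminus\overline{A}$ is closed in $\bN$, so
\[
\mathcal{DR}=\bigcap\left\{\bN\setminus\overline{A}:A\text{ does not contain the pattern}\right\}
\]
is an intersection of closed sets.

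Fix $r\in K(\bN,\cdot)$ and let $L^m=\bN\cdot r$ be the minimal left ideal of $(\bN,\cdot)$ containing $r$. I apply Lemma \ref{multhick} with the additive left ideal $L^a=K(\bN,+)$ and this multiplicative $L^m$, obtaining an idempotent $p\in E(K(\bN,+))$ together with an ultrafilter $q\in L^m$ satisfying $p\star_n q=q$. Since $q\in L^m\subseteq K(\bN,\cdot)$, every member of $q$ is multiplicatively piecewise syndetic, and by the fact already invoked in the first proof of Theorem \ref{main1}, every such set contains $IP_N$ for every $N\in\N$. The hypotheses of Theorem \ref{main2} are therefore satisfied, and applied to the equation $p\star_n q=q$ it yields that every $A\in q$ contains the pattern $\mathcal{PF}^n_{k,\varPhi}$; i.e., $q\in\mathcal{DR}$. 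The left ideal property then forces $L^m=\bN\cdot q\subseteq\mathcal{DR}$, whence $r\in\mathcal{DR}$. As $r\in K(\bN,\cdot)$ was arbitrary and $\mathcal{DR}$ is closed, the conclusion follows.

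The chief obstacle, as I see it, is marrying the additive and multiplicative ideal structures through a single witness $q$: one needs $q$ to be a $\star_n$-fixed point of an additive minimal idempotent while simultaneously sitting in a prescribed multiplicative minimal left ideal. This is exactly what Lemma \ref{multhick} (via the Hindman--Strauss--Zamboni topological-dynamics machinery) delivers, and verifying that such a $q$ automatically meets the $IP_N$ side condition of Theorem \ref{main2} is the only delicate point. Once this is in hand, the remainder of the proof is a direct compilation of Lemmas \ref{dilation} and \ref{multhick} together with Theorem \ref{main2}, followed by the elementary observation that $\mathcal{DR}$ is closed.
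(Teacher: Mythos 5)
Your proof is correct and follows essentially the same route as the paper's: Lemma \ref{multhick} (applied with a multiplicative minimal left ideal) together with Theorem \ref{main2} places one ultrafilter of each minimal left ideal of $(\bN,\cdot)$ into $\mathcal{DR}$, Lemma \ref{dilation} then absorbs the entire minimal left ideal, and the passage to the closure is immediate. Your explicit check that the $q$ produced by Lemma \ref{multhick} satisfies the $IP_N$ hypothesis of Theorem \ref{main2} (because members of ultrafilters in $K(\bN,\cdot)$ are multiplicatively piecewise syndetic) is a detail the paper leaves implicit, and your observation that $\mathcal{DR}$ is closed is just a repackaging of the paper's remark that $\overline{A}\cap K\left(\bN,\cdot\right)\neq\emptyset$ whenever $A\in p\in\overline{K\left(\bN,\cdot\right)}$.
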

\begin{proof}
   Let $p\in \overline{K\left(\bN,\cdot\right)},$ and $A\in p.$ Then $\overline{A}\cap K\left(\bN,\cdot\right)\neq \emptyset.$ Hence it will be sufficient to prove $K\left(\bN,\cdot\right)\subseteq \mathcal{DR}.$

   From \cite[Theorem 1.51]{key-21} we know that $K\left(\bN,\cdot\right)=\bigcup \{L:L\text{ is a minimal left ideal of }(\bN,\cdot)\}.$
    Choose any minimal left ideal $L$ of $(\bN,\cdot).$ From Lemma \ref{multhick}, there exists $p\in L$ such that $p\in \mathcal{DR}.$ Again from Lemma \ref{dilation}, we have $\bN\cdot p\subseteq \mathcal{DR}.$ But $L$ is minimal. Hence $\bN\cdot p=L.$ Hence $L\subseteq \mathcal{DR}.$ As $L$ is an arbitrary minimal left ideal, we have $K\left(\bN,\cdot\right)\subseteq \mathcal{DR}.$
\end{proof}

Now we are in the position to prove Theorem \ref{final}.

\begin{proof}[Proof of Theorem \ref{final}]
The proof is similar to the proof of Theorem \ref{trivial}. 
Choose any $p\in K(\beta\N,\cdot)$ and $A\in p$. Then by \cite[Theorem 4.39]{key-22}, the set $S=\{x\in \N:x^{-1}A\in p\}$ is syndetic. Hence there exists $\{b_1,\ldots ,b_m\}\subset \N$ such that $FS\left(\{b_1,\ldots ,b_m\}\right)\subset S.$ Hence $B=A\cap \bigcap_{x\in FS\left(\{b_1,\ldots ,b_m\}\right)}x^{-1}A\in p.$ Now from Theorem \ref{main2}, there exist $a_1,\ldots ,a_n\in \N$ such that $a_n\cdot 2^{\sum_{i=1}^{N-1}\lambda_ia_i}\in B$ for some suitable choice of $(\lambda_i)_{i=1}^{N-1}.$ Now redefining variables like in the proof of Theorem \ref{trivial} we have the monochromatic solution of the given equation.
\end{proof}

\subsection{Combined Exponential and Product Hindman Theorem}
Before we conclude our article, we also prove two variants (Theorem \ref{hi1}, \ref{hi2}) of the combined additive-multiplicative Hindman theorem originally proved in \cite{h1,key-51}.
\begin{thm}[\cite{h1,key-51}]\label{fsfp} 
    If $\N$ is finitely colored, then there exist two sequences $\langle x_n\rangle_n$ and $\langle y_n\rangle_n$ such that $FS(\langle x_n\rangle_n)\cup FP(\langle y_n\rangle_n)$ is in the same color.
\end{thm}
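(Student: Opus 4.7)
The plan is to exhibit a single ultrafilter $q \in \beta\N$ that is simultaneously a multiplicative idempotent and belongs to $\overline{E(\beta\N, +)}$, the closure of the set of additive idempotents. Granted such a $q$, for any finite partition $\N = A_1 \cup \cdots \cup A_r$ some cell $A_i$ lies in $q$. The Galvin--Glazer argument applied to the $\cdot$-idempotent $q$ then yields a sequence $\langle y_n\rangle_n$ with $FP(\langle y_n\rangle_n) \subseteq A_i$, and membership of $q$ in $\overline{E(\beta\N,+)}$ forces $A_i$ to lie in some additive idempotent ultrafilter, which by Galvin--Glazer for $(\beta\N,+)$ provides a sequence $\langle x_n\rangle_n$ with $FS(\langle x_n\rangle_n) \subseteq A_i$.

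To produce such a $q$, I would apply Ellis' theorem to $\overline{E(\beta\N, +)}$ viewed as a subset of the compact right-topological semigroup $(\beta\N,\cdot)$. The set $\overline{E(\beta\N, +)}$ is manifestly closed, and nonempty since Ellis applied to $(\beta\N,+)$ already furnishes additive idempotents. The real content of the argument is showing that $\overline{E(\beta\N,+)}$ is $\cdot$-closed, i.e.\ a subsemigroup. The useful preparatory remark is that $r\in\overline{E(\beta\N,+)}$ if and only if every member of $r$ is an additive $IP$-set, since the basic open neighborhood $\overline{A}$ of $r$ (for $A\in r$) must intersect $E(\beta\N,+)$, and members of additive idempotents are additive $IP$-sets by Galvin--Glazer.

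With that remark in hand, take $p,q\in\overline{E(\beta\N,+)}$ and $A \in p \cdot q$, and set $B = \{x \in \N : x^{-1}A \in q\}$. Since $B \in p$, the set $B$ is an additive $IP$-set; in particular it is nonempty, so pick any $b \in B$. Then $b^{-1}A \in q$ is also an additive $IP$-set, say $b^{-1}A \supseteq FS(\langle c_n\rangle_n)$. Left-multiplying by $b$ places $FS(\langle bc_n\rangle_n)$ inside $A$, so $A$ is an additive $IP$-set. Thus every member of $p \cdot q$ is an additive $IP$-set, giving $p \cdot q \in \overline{E(\beta\N,+)}$, as needed.

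The main obstacle is precisely this multiplicative-closure step. Its subtlety is the ``one $b$ suffices'' trick: one does not try to amalgamate the different $FS$-sets arising from the uncountably many candidates $b \in B$, but instead exploits the fact that a single $b\in B$ already translates an entire infinite $FS$-set into $A$. Everything else (nonemptiness, closedness, Ellis, and the two Galvin--Glazer extractions at the end) is standard once the right subsemigroup $\overline{E(\beta\N,+)}$ has been identified.
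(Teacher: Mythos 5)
The paper does not prove Theorem \ref{fsfp}; it states it as a known result and cites Hindman and Bergelson--Hindman, so there is no in-paper proof to compare against. Your argument is correct, and it is essentially the classical proof from those sources: identify $\overline{E(\beta\N,+)}$ as a closed multiplicative subsemigroup of $(\beta\N,\cdot)$, extract a multiplicative idempotent from it by Ellis' theorem, and read off $FP$ from the multiplicative idempotency and $FS$ from membership in the closure of the additive idempotents. Two small remarks. First, your ``one $b$ suffices'' computation never uses that $p\in\overline{E(\beta\N,+)}$ (only that $B\neq\emptyset$, which holds for any ultrafilter $p$), so you have in fact shown that $\overline{E(\beta\N,+)}$ is a \emph{left ideal} of $(\beta\N,\cdot)$, not merely a subsemigroup; this is exactly the fact the paper itself invokes, in the form ``$\overline{E(K(\beta\N,+))}$ is a left ideal of $(\bN,\cdot)$,'' when proving the exponential analogue in Lemma \ref{hi2}. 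Second, your preparatory equivalence ``$r\in\overline{E(\beta\N,+)}$ iff every member of $r$ is an additive $IP$-set'' is used in the direction you did not justify: to conclude $p\cdot q\in\overline{E(\beta\N,+)}$ from the fact that every $A\in p\cdot q$ contains an $FS$-set, you need that every set containing an $FS$-set belongs to \emph{some} additive idempotent. That converse is standard (the tail sets $\bigcap_n\overline{FS(\langle c_k\rangle_{k\geq n})}$ form a compact subsemigroup of $(\beta\N,+)$ and hence contain an idempotent), but it is a separate lemma from the Galvin--Glazer direction you quote, and the proof is incomplete as written without it.
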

Let $FEX\left(\langle x_n\rangle_n\right)$ be the Hindman tower generated by the sequence $\langle x_n\rangle_n.$
In the exponential counterpart of Theorem \ref{fsfp}, the following theorem of J. Sahasrabudhe is finitary but strong enough that state that the sequences $\langle x_n\rangle_n$ and $\langle y_n\rangle_n$ are the same in Theorem \ref{fsfp}.
\begin{thm}[\cite{key-4}]\label{oohao}
    If $m\in \N,$ and $\N$ is finitely colored, then there exists a sequence $\langle x_n\rangle_n$ such that $FEX\left(\langle x_n\rangle_{n=1}^m\right)\cup FP\left(\langle x_n\rangle_{n=1}^m\right)$ is monochromatic.
\end{thm}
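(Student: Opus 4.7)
The plan is to combine Sahasrabudhe's $\log_{2}$-coloring reduction with a finite iterative polynomial van der Waerden construction modeled directly on the proof of Theorem \ref{main2}. Given an $r$-coloring $c:\N\to[r]$, induce the coloring $c'(n):=c(2^{n})$. Setting $x_{i}=2^{y_{i}}$, one computes $x_{i_{1}}x_{i_{2}}\cdots x_{i_{k}}=2^{y_{i_{1}}+\cdots+y_{i_{k}}}$ and $x_{j_{k}}^{x_{j_{k-1}}^{\cdot^{\cdot^{x_{j_{1}}}}}}=2^{T_{j_{1},\ldots,j_{k}}(y)}$, where $T_{j_{1},\ldots,j_{k}}(y)=y_{j_{k}}\cdot 2^{y_{j_{k-1}}\cdot 2^{\cdots 2^{y_{j_{1}}}}}$. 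Hence it suffices to produce $y_{1}<y_{2}<\cdots<y_{m}$ such that both $FS(\langle y_{i}\rangle_{i=1}^{m})$ and the entire family of tower values $T_{j_{1},\ldots,j_{k}}(y)$ land inside a single color class $A$ of $c'$; then $x_{i}=2^{y_{i}}$ will witness the theorem.

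For the construction, I would pick an ultrafilter $p$ of the type considered in Theorem \ref{main2}, namely $p=\pi\star_{2}\rho$ with $\pi\in E(K(\beta\N,+))$ and $\rho$ supplying the multiplicative-exponential richness, and take $A\in p$ to be the unique color class of $c'$ that belongs to $p$. I would then build $y_{1},\ldots,y_{m}$ inductively. At stage $k$, having fixed $y_{1},\ldots,y_{k-1}$, two kinds of fresh constraints on $y_{k}$ appear: the $2^{k-1}$ additive constraints $y_{k}+\sigma\in A$ for $\sigma\in FS(\langle y_{i}\rangle_{i=1}^{k-1})\cup\{0\}$, and the multiplicative-exponential constraints $C_{\ell}y_{k}\in A$ where the finitely many constants $C_{\ell}=2^{y_{j_{k-1}}\cdot 2^{\cdots 2^{y_{j_{1}}}}}$ come from the new towers in which $y_{k}$ (having the largest index so far) plays the role of outermost base. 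Exactly as in the proof of Theorem \ref{main2}, I would handle the additive conditions via the Hindman $p$-star refinement $A^{\star}=\{n\in A:-n+A\in\pi\}\in\pi$, and handle the multiplicative-exponential conditions by applying the polynomial van der Waerden theorem (Theorem \ref{n}) to the linear polynomials $P_{\ell}(d)=C_{\ell}d$; the resulting sets of admissible $d$ are each $IP_{r}^{\star}$.

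The principal obstacle is controlling the interaction between these two families of conditions as the coefficients $C_{\ell}$ grow doubly exponentially in $k$: one needs to verify that after intersecting $A^{\star}$ with the finitely many multiplicatively-translated $C_{\ell}^{-1}A^{\star}$ together with the additive refinements, the resulting set still contains a valid $y_{k}$. This is the same difficulty that arises in the proof of Theorem \ref{main2}, and it is overcome by the same device: by \cite[Proposition 2.5]{star}, the intersection of finitely many $IP_{r}^{\star}$ sets is again $IP_{s}^{\star}$ for some $s$, hence membership in $\rho$ is preserved, hence the intersection is nonempty and in fact large. Because $m$ is fixed, the inductive process terminates after $m$ steps, producing $y_{1}<\cdots<y_{m}$ with the required combinatorial closure properties. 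Applying $2^{(\cdot)}$ transports the monochromaticity back to the original coloring $c$, yielding the desired monochromatic $FEX(\langle x_{i}\rangle_{i=1}^{m})\cup FP(\langle x_{i}\rangle_{i=1}^{m})$.
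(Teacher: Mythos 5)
This statement is quoted from \cite{key-4}; the paper gives no proof of it, and in fact the surrounding discussion explains that the authors' ultrafilter machinery only yields the weaker Theorem \ref{hi1}, in which the exponential and product patterns are generated by \emph{different} sequences. Your proposal claims to recover the same-sequence statement by the methods of Theorem \ref{main2}, and this is where it breaks down. After the (correct) $\log_2$ reduction, the requirement at stage $k$ is that $y_k+\sigma\in A$ for all $\sigma\in FS(\langle y_i\rangle_{i<k})$ \emph{and} $C_\ell\, y_k\in A$ for the construction-dependent multipliers $C_\ell=x_{j_{k-1}}^{\cdot^{\cdot^{x_{j_1}}}}$. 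This is precisely a finite sums-and-products constraint, and the two tools you invoke do not meet it. First, Theorem \ref{n} produces, for a linear polynomial $P_\ell(d)=C_\ell d$, a pattern $\{a,a+C_\ell d\}\subset B$ for some auxiliary anchor $a$; in the proof of Theorem \ref{main2} this anchor is absorbed because the conclusion there is about a \emph{new} element $x=n^{a}n'$ times $n^{C_\ell d}$. In your setting there is no anchor to absorb: you need the pure dilation $C_\ell y_k\in A$ of an element $y_k$ that is simultaneously pinned down by the additive translates, and Theorem \ref{n} simply does not produce that. Second, the sets $C_\ell^{-1}A^{\star}$ are not $IP_r^{\star}$ sets, so \cite[Proposition 2.5]{star} does not apply to them; $C_\ell^{-1}A\in u$ is equivalent to $A\in C_\ell\cdot u$, and nothing in the construction guarantees this for the doubly exponentially large $C_\ell$ determined by earlier choices. (There is also the smaller slip that $A\in p=\pi\star_2\rho$ does not give $A\in\pi$, so $A^{\star}$ taken with respect to $\pi$ need not lie in $\pi$; in Theorem \ref{main2} the additive idempotent acts on the derived set $B=\{m:\{n':2^{m}n'\in A\}\in\rho\}$, not on $A$ itself.)

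The upshot is that your argument, repaired where it can be, proves only that a single color class contains an $FS$ set, an $FP$ set, and an $FEX$ tower for three possibly different sequences --- which is the content of Theorem \ref{hi1} and Lemma \ref{hi2}, the exponential analogue of Theorem \ref{fsfp}. Forcing the sequences to coincide is the whole difficulty of Theorem \ref{oohao}, and Sahasrabudhe's proof in \cite{key-4} is a genuinely different, quantitatively controlled combinatorial induction (iterated van der Waerden with compactness), not an intersection of large sets inside one ultrafilter. If a clean ultrafilter proof of the same-sequence statement existed, it would likewise resolve long-standing questions about sums and products that are known to resist exactly this kind of argument.
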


We don't know if Theorem \ref{oohao} is true in an infinitary case, but we have a relatively weaker result which is the exponential version of the Theorem \ref{fsfp}.

\begin{thm}\label{hi1}
    For every finite partition of $\N,$ there exist two sequences $\langle x_n\rangle_n$ and $\langle y_n\rangle_n$ such that $FEX(\langle x_n\rangle_n)\cup FP(\langle y_n\rangle_n)$ is monochromatic.
\end{thm}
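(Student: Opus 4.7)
The plan is to exhibit a single multiplicative idempotent $r \in E(\bN,\cdot)$ whose every member contains both an $FP$-set and an $FEX$ tower. Once such $r$ is in hand, picking any color $C_i$ of the partition with $C_i \in r$ immediately supplies sequences $\langle y_n\rangle$ and $\langle x_n\rangle$ with $FP(\langle y_n\rangle) \cup FEX(\langle x_n\rangle) \subseteq C_i$.

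The key observation is that the exponential map $\tilde{n}\colon(\bN,+) \to (\bN,\cdot)$, $\tilde{n}(q) = n^q$ (the continuous extension of $m \mapsto n^m$), is a continuous semigroup homomorphism. Indeed, $n^{a+b} = n^a \cdot n^b$ on $\N$ extends to $n^{p+q} = n^p \cdot n^q$ on $\bN$ by two applications of right-continuity: for $x \in \N$, $\tilde{n}(x+q) = n^x \cdot n^q$ by continuity of $\lambda_x$ (on the $+$ side) and $\lambda_{n^x}$ (on the $\cdot$ side); and then $\tilde{n}(p+q) = \lim_{x \to p} \tilde{n}(x+q) = \lim_x n^x \cdot n^q = n^p \cdot n^q$ by continuity of $\rho_{n^q}$. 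In particular $\tilde{n}$ carries additive idempotents of $\bN$ to multiplicative idempotents of $\bN$.

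Next I invoke the classical joint-structure result of Hindman and Strauss (see \cite{key-22}) that $E(\bN,+) \cap K(\bN,\cdot) \neq \emptyset$, and fix $p$ in this intersection. Setting $r := n^p$, the homomorphism property above gives $r \in E(\bN,\cdot)$; thus by the multiplicative Galvin--Glazer theorem, every $A \in r$ contains some $FP(\langle y_n\rangle)$. Simultaneously, $p \in K(\bN,\cdot) \subseteq \overline{K(\bN,\cdot)}$, so Lemma \ref{surprise} guarantees that every member of $r = n^p$ contains some Hindman tower $FEX(\langle x_n\rangle)$. Applied to a color class $C_i \in r$ of the given partition, this finishes the proof.

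The main subtlety is the existence of the bridging ultrafilter $p \in E(\bN,+) \cap K(\bN,\cdot)$ -- a single object connecting the additive and multiplicative structures of $\bN$ -- which is exactly what makes $r = n^p$ enjoy both multiplicative idempotence (yielding $FP$) and membership in $\tilde{n}[\overline{K(\bN,\cdot)}]$ (yielding $FEX$ via Lemma \ref{surprise}); no finer combinatorial analysis of the two sequences, beyond what Galvin--Glazer and Lemma \ref{surprise} already provide, is needed.
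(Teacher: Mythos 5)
Most of your individual steps are sound: the continuous extension of $m\mapsto n^m$ is indeed a homomorphism from $(\bN,+)$ to $(\bN,\cdot)$, so it carries additive idempotents to multiplicative ones, and Lemma \ref{surprise} does put a Hindman tower in every member of $n^p$ whenever $p\in\overline{K(\bN,\cdot)}$. The gap is the bridging ultrafilter itself. The existence of $p\in E(\bN,+)\cap K(\bN,\cdot)$ is not a ``classical joint-structure result'' that you can cite from Hindman--Strauss: the classical theorem underlying Theorem \ref{fsfp} has the two operations arranged the other way round, namely there is a \emph{multiplicative} idempotent in $\overline{E(K(\bN,+))}$, which one gets because $\overline{E(K(\bN,+))}$ is a closed left ideal of $(\bN,\cdot)$ and hence contains a minimal left ideal and its idempotents. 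Swapping $+$ and $\cdot$ is not cosmetic, because the two structures interact asymmetrically: for instance $K(\bN,\cdot)$ meets $\overline{K(\bN,+)}$ while $K(\bN,+)\cap\overline{K(\bN,\cdot)}=\emptyset$ (Hindman--Strauss, Chapter 13). Moreover the route that would normally produce an additive idempotent inside $\overline{K(\bN,\cdot)}$ --- showing it is a closed left ideal, or at least a closed subsemigroup, of $(\bN,+)$ and applying Ellis' theorem --- is blocked, since multiplicative piecewise syndeticity is not translation invariant: $2\N$ is multiplicatively syndetic, yet $1+2\N$ is not even multiplicatively piecewise syndetic (all its finite dilates land in the odd numbers, which contain no pattern $\{x,2x\}$). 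So even the weaker statement you actually need, $E(\bN,+)\cap\overline{K(\bN,\cdot)}\neq\emptyset$, is left entirely unproved, and I do not know it to be true.

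The paper manufactures the required multiplicative idempotent without ever asking an additive idempotent to be multiplicatively minimal: starting from $p\in E\left(K(\bN,+)\right)$ it forms the left ideal $L=\bN\cdot(p\star_n p)$ of $(\bN,\cdot)$ and chooses a multiplicative idempotent $r=q\cdot(p\star_n p)\in L$. Idempotency of $r$ in $(\bN,\cdot)$ gives the $FP$ part, while membership of $r$ in $\bN\cdot(p\star_n p)$, combined with Theorem \ref{main2} and Lemma \ref{dilation} (the class $\mathcal{DR}$ is a left ideal of $(\bN,\cdot)$), gives the polynomial pattern whose exponentiation is the tower $FEX$. If you want to salvage your push-forward idea, you must either actually prove that some additive idempotent lies in $\overline{K(\bN,\cdot)}$, or replace that hypothesis by membership in $\mathcal{DR}$ obtained along the lines of Lemma \ref{dilation} and Lemma \ref{hi2}.
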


Note that to prove Theorem \ref{hi1}, we don't need the following lemma, in its full strength, which extends Theorem \ref{fsfp}, but we think it is valuable to mention.

\begin{lem}\label{hi2}
     Let $n\in \N,$ $F_{1}\in \mathcal{P}_f(\mathbb{P})$. For every sequence $\varPhi=\left(f_{n}:n\geq2\right)$
of functions $f_{n}:\mathbb{N}\rightarrow\mathbb{N},$ let $F_{n,x}$
be a finite collection of polynomials defined as in Theorem \ref{main2}. Then for every finite partition of $\N$, there exist three sequences $\langle x_k\rangle_{k=1}^\infty, \langle y_k\rangle_{k=1}^\infty, \langle z_k\rangle_{k=1}^\infty $ such that $$\mathcal{PF}^n_{k,\varPhi}\left(x_{k}\right)_{k=1}^\infty\cup FS(\langle y_k\rangle_{k=1}^\infty)\cup FP(\langle z_k\rangle_{k=1}^\infty) $$ is in the same partition.
\end{lem}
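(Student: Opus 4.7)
The plan is to exhibit a single ultrafilter $p\in\bN$ whose every member simultaneously contains a $\mathcal{PF}^n_{k,\varPhi}$-set, an additive $FS$-set, and a multiplicative $FP$-set. Then for any finite colouring $\N=\bigcup_{i=1}^{r}C_i$, the unique cell $A=C_i\in p$ will contain all three configurations, and the three sequences $\langle x_k\rangle,\langle y_k\rangle,\langle z_k\rangle$ can be read off from this single cell. The candidate I have in mind is
\[
p\ \in\ \overline{E(K(\bN,+))}\ \cap\ E(K(\bN,\cdot)),
\]
and its non-emptiness is the classical Bergelson--Hindman-type existence result that underlies the combined additive--multiplicative Hindman theorem (Theorem \ref{fsfp}); see \cite{h1,key-51,key-22}. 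Concretely, one checks that $\overline{E(K(\bN,+))}$ coincides with the set of ultrafilters all of whose members are additively central, and that dilation-invariance of central sets makes $\overline{E(K(\bN,+))}$ a closed left $\cdot$-ideal of $(\bN,\cdot)$. Hence it meets every minimal left $\cdot$-ideal $L$, the intersection $L\cap \overline{E(K(\bN,+))}$ is a closed $\cdot$-subsemigroup, and an Ellis--Numakura argument inside it yields the desired multiplicatively minimal idempotent lying in the closure of the additive minimal idempotents.

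With $p$ fixed and $A=C_i\in p$ identified, I would harvest the three sequences independently. First, because $p$ is a multiplicative idempotent with $A\in p$, the standard Galvin--Glazer inductive construction inside $(\bN,\cdot)$ applied to $A^{\star}=\{a\in A:a^{-1}A\in p\}$ yields an injective sequence $\langle z_k\rangle$ with $FP(\langle z_k\rangle)\subseteq A$. Second, since $p\in E(K(\bN,\cdot))\subseteq K(\bN,\cdot)\subseteq\overline{K(\bN,\cdot)}\subseteq\mathcal{DR}$ by Lemma \ref{surprise}, the set $A$ satisfies the conclusion of Theorem \ref{main2}, producing $\langle x_k\rangle$ with $\mathcal{PF}^n_{k,\varPhi}(x_k)\subseteq A$. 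Third, because $p\in\overline{E(K(\bN,+))}$, the basic clopen neighbourhood $\overline{A}=\{u\in\bN:A\in u\}$ of $p$ meets $E(K(\bN,+))$; pick $q\in\overline{A}\cap E(K(\bN,+))$. Then $q$ is an additive idempotent with $A\in q$, and the Galvin--Glazer argument inside $(\bN,+)$ produces $\langle y_k\rangle$ with $FS(\langle y_k\rangle)\subseteq A$.

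Assembling the three inclusions, the union $\mathcal{PF}^n_{k,\varPhi}(x_k)\cup FS(\langle y_k\rangle)\cup FP(\langle z_k\rangle)$ lies inside the single cell $A=C_i$, giving the required monochromatic configuration. The genuinely delicate step is the existence of an ultrafilter $p$ simultaneously in $\overline{E(K(\bN,+))}$ and in $E(K(\bN,\cdot))$; once this classical ingredient is granted, each of the three extractions is routine, and, crucially, the three sequences are selected independently inside the common cell $A$, so no compatibility relation between them needs to be verified. I expect this to be the main (and essentially only) obstacle -- all the exponential content of the lemma has already been absorbed into Lemma \ref{surprise}, and the Bergelson--Hindman apparatus supplies the rest.
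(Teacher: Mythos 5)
Your proposal is correct, and it follows the same overall strategy as the paper --- exhibit a single ultrafilter all of whose members contain the three structures --- but it realizes that strategy with a different witness and a different supporting lemma for the exponential part. The paper starts from an additive minimal idempotent $p\in E\left(K(\bN,+)\right)$, forms the closed left $\cdot$-ideal $L=\bN\cdot(p\star_n p)$, and takes an idempotent $q\cdot(p\star_n p)$ in $L$; the $FP$ part comes from idempotence, the $FS$ part from the fact that $\overline{E\left(K(\bN,+)\right)}$ is a left $\cdot$-ideal containing $p$ (hence containing $(q\cdot n^{p})\cdot p$), and the $\mathcal{PF}^n_{k,\varPhi}$ part from Lemma \ref{dilation} applied to $p\star_n p\in\mathcal{DR}$ (which holds by Theorem \ref{main2}, since every member of the additive idempotent $p$ contains $IP_N$ sets for all $N$). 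You instead take a \emph{minimal} multiplicative idempotent inside $\overline{E\left(K(\bN,+)\right)}$ and obtain the $\mathcal{PF}^n_{k,\varPhi}$ structure from Lemma \ref{surprise} via $E\left(K(\bN,\cdot)\right)\subseteq\overline{K(\bN,\cdot)}\subseteq\mathcal{DR}$. Both routes rest on the same classical fact that $\overline{E\left(K(\bN,+)\right)}$ is a closed left ideal of $(\bN,\cdot)$; the trade-off is that your argument needs minimality of the multiplicative idempotent and imports the heavier Lemma \ref{surprise} (whose proof uses the dynamical Lemma \ref{multhick}), whereas the paper's idempotent need not be minimal and the exponential structure is obtained directly from Lemma \ref{dilation}. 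Your extraction of the three sequences and the observation that no compatibility between them is required match the paper exactly.
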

\begin{proof}
    Let $p\in E\left( K(\beta\mathbb{N},+)\right)$ and $L=\bN\cdot (p\star_n p).$ Now $L$ is a left ideal of $(\bN,\cdot ).$ So it contains an idempotent say $q\cdot (p\star_n p).$ Let $A\in q\cdot (p\star_n p)=(q\cdot  n^{p})\cdot p.$ Now $q\cdot (p\star_n p)$ is an idempotent, hence there exists a sequence $\langle z_k\rangle_{k=1}^\infty$ such that $FP(\langle z_k\rangle_{k=1}^\infty)\subset A.$ As $p\in E\left(K(\beta\mathbb{N},+)\right) \subset \overline{E\left(K(\beta\mathbb{N},+)\right)}$, and $ \overline{E\left(K(\beta\mathbb{N},+)\right)}$ is a left ideal of $(\bN,\cdot)$, we have $(q\cdot n^{p})\cdot p\in \overline{E\left(K(\beta\mathbb{N},+)\right)}$. So $A$ contains $FS(\langle y_k\rangle_{k=1}^\infty)$ for some sequence $\langle y_k\rangle_{k=1}^\infty.$ And from Lemma \ref{dilation}, we have a sequence $\langle x_n\rangle_{n=1}^\infty$ such that $\mathcal{PF}^n_{k,\varPhi}\left(x_{k}\right)_{k=1}^\infty\subset A.$

    This completes the proof.
\end{proof}

\begin{proof}[Proof of Theorem \ref{hi1}]
Choose any ultrafilter $p\in \bN$ each of whose members contains the combinatorial conclusion of the Lemma \ref{hi2}.  Then each member of $n^p$ will be the desired ultrafilter each of whose members contains the pattern in Theorem \ref{hi1}. This completes the proof.
    
\end{proof}


\section{Thick Sets Not Containing Sahasrabudhe-Schur pattern}\label{count}

From J. Sahasrabudhe's work \cite{key-4}, we know that patterns of the form $\{a,b,a+b,x,y,x^{y}\}$ is not partition regular and hence all ultrafilters in $\overline{K(\bN,\cdot)}$ cannot be witnesses of the partition regularity of exponential triples, as they are already witnesses of the partition regularity of $\{a,b,a+b\}.$
In this section, we construct an additive and multiplicative thick-set, that does not contain any exponential triple.


\begin{thm}\label{ex1}
    There exists a set $A$ which is both additively and multiplicatively thick and does not contain any exponential triple.
\end{thm}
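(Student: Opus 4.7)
The plan is to construct $A$ explicitly as a union of intervals $I_k = [n_k, n_k^2]$ in $\mathbb{N}$, where $\langle n_k \rangle_{k \geq 1}$ is chosen to grow very rapidly. Concretely, I would set $n_1 = 3$, require $n_{k+1} > n_k^{2 n_k^2}$ for every $k \geq 1$, and let $A = \bigcup_{k \geq 1} I_k$. The intuition is that each $I_k$ is simultaneously additively long and multiplicatively wide enough to witness thickness, while the gaps between consecutive intervals are made so large that any expression $x^y$ with $x, y \in A$ must fall into one of those gaps.

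For thickness, note first that $|I_k| = n_k^2 - n_k + 1 \to \infty$, so for any finite $F \subset \mathbb{N}$ one can pick $k$ with $\max F \leq n_k^2 - n_k$ and observe that $n_k + F \subseteq I_k$, yielding additive thickness. For multiplicative thickness, $I_k$ contains every integer in $[n_k, n_k^2]$, so $n_k \cdot F \subseteq [n_k, n_k \cdot \max F] \subseteq I_k$ whenever $\max F \leq n_k$; taking $k$ sufficiently large then works for any prescribed $F$.

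For the absence of exponential triples, suppose toward contradiction that $x, y, x^y \in A$ with $x \neq y$; then $x, y \geq n_1 = 3$. Write $x \in I_k$ and $y \in I_j$, and set $\ell = \max(k, j)$ and $m_\ell := n_\ell^2$. I would establish two bounds. For the lower bound $x^y > m_\ell$: if $\ell = j$ then $x \geq 3$ and $y \geq n_\ell$ give $x^y \geq 3^{n_\ell}$, while if $\ell = k$ then $x \geq n_\ell$ and $y \geq 3$ give $x^y \geq n_\ell^3$; the elementary inequalities $3^n > n^2$ and $n^3 > n^2$ (both valid for $n \geq 2$) then yield $x^y > n_\ell^2 = m_\ell$. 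For the upper bound, $x^y \leq m_k^{m_j} \leq m_\ell^{m_\ell} = n_\ell^{2 n_\ell^2} < n_{\ell+1}$, directly from the growth condition on $\langle n_k \rangle$. Consequently $m_\ell < x^y < n_{\ell+1}$, so $x^y$ sits strictly in the gap between $I_\ell$ and $I_{\ell+1}$; in particular $x^y \notin A$, contradicting our assumption.

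The main point to be careful about is matching the growth rate of the sequence to the exponential operation: one must ensure the lower bound $x^y > m_\ell$ holds uniformly in both sub-cases, and this is exactly why starting at $n_1 \geq 3$ is essential (the inequality $2^n > n^2$ fails for small $n$, whereas $3^n > n^2$ holds throughout $n \geq 1$). Once the base value and the super-exponential growth condition $n_{k+1} > n_k^{2 n_k^2}$ are in place, the remaining verification is elementary bookkeeping, and requires none of the ultrafilter machinery developed earlier in the paper.
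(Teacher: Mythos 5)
Your construction is correct and is essentially the same as the paper's: both build $A$ as a union of intervals whose right endpoint is roughly the square of the left endpoint (giving multiplicative thickness and lengths tending to infinity for additive thickness), with the next interval starting beyond $(\text{right endpoint})^{\text{right endpoint}}$ so that any $x^y$ with $x,y\in A$ lands in a gap. Your explicit parametrization $I_k=[n_k,n_k^2]$ with $n_{k+1}>n_k^{2n_k^2}$ and the base case $n_1=3$ just makes the bookkeeping slightly cleaner than the paper's version.
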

\begin{proof}
    We construct inductively longer and longer intervals $I_{n}=\left[x_{n},y_{n}\right]$ so that $A=\bigcup_{n\in \N} I_{n}$ has the desired properties.
    
    For $n=1$, just take $x_1=y_1>2$. 
    
    Assuming to have defined $I_{n}=\left[x_{n},y_{n}\right]$, pick $x_{n+1},y_{n+1}$ such that
    \begin{enumerate}
        \item $y_{n}^{y_{n}}<x_{n+1}< y_{n+1} < x_{n+1}^{2} < 2^{x_{n+1}}$ and 
        \item $y_n=x_n \cdot y_{n-1}$.
    \end{enumerate}

    Let $A=\bigcup_{n\in \mathbb{N}}[x_n,y_n]$. Then:
    \begin{enumerate}
        \item $A$ is additively thick as condition (2) above entails that $|I_{n}|\rightarrow \infty$ for $n\rightarrow \infty$;
        \item $A$ is multiplicatively thick, as for any $n\in\N$, if we take $y_{p}$ with $y_{p}>n$ we have that $x_{p},2x_{p},\dots,nx_{p}\in I_{p}\subseteq A$;
        \item $A$ does not contain any exponential triple. In fact, let $x,y\in A$. If $x<y$, let $y\in I_{n}$. Then 
    $$y_n<2^{x_n}<x^{x_n}<x^y<y^y\leq y_n^{y_n}<x_{n+1}.$$ 
    Hence $x^y\notin A.$
    
    If $x>y$, analogously let $x\in I_{n}$. Then $$y_n<x_n^2<x_n^y<x^y<y_n^y<y_n^{y_n}<x_{n+1}.$$
    So $x^y\notin A.$
     \end{enumerate}
    This completes the proof.
\end{proof}

\section*{Acknowledgement} The authors are thankful to Prof. D. Strauss for her invaluable discussions on the nonexistence of $\star_n$ idempotent ultrafilters that have been addressed in Section \ref{sour}.
The first author of this paper is supported by NBHM postdoctoral fellowship with reference no: 0204/27/(27)/2023/R \& D-II/11927.



\begin{thebibliography}{10}

\bibitem{pissa} V. Bergelson: Ultrafilters, IP sets, dynamics, and combinatorial number theory, in V. Bergelson, A.
Blass, M. Di Nasso, R. Jin (Eds.), Ultrafilters Across Mathematics, Contemp. Math., 530,
AMS, (2010), 23–47.

\bibitem{ip} V. Bergelson and D. Glasscock: On the interplay between additive and multiplicative largeness and its combinatorial applications, J. Combin. Theory Ser. A 172 (2020), 105203, 60.

\bibitem{central} V. Bergelson, and N. Hindman: Nonmetrizable topological dynamics and Ramsey theory, Trans. Am. math. Soc. 320 (1990).

\bibitem{key-51} V. Bergelson and N. Hindman: Additive and Multiplicative Ramsey
Theorems in N – Some Elementary Results, Combinatorics, Probability
and Computing 2 (3) (1993), 221 - 241.

\bibitem{bhw} V. Bergelson,   N. Hindman, and K. Williams: Polynomial extensions of the Milliken-Taylor Theorem, Trans. Amer. Math. Soc. 366 (2014), 5727-5748.


\bibitem{key-75} V. Bergelson, J.H. Johnson, and J. Moreira: New polynomial and multidimensional
extensions of classical partition results, J. Combin. Theory Ser. A 147 (2017), 119–154.

\bibitem{70} V. Bergelson and A. Leibman: Polynomial extensions of van der Waerden and Szemer\'{e}di theorems,
J. Amer. Math. Soc. 9 (1996), 725–753.

\bibitem{71} V. Bergelson and A. Leibman: Set polynomials and polynomial extension of the Hales–Jewett
theorem, Annals of Mathematics. Second Series 150 (1) (1999), 33-75.

\bibitem{star} V. Bergelson, and D. Robertson: Polynomial recurrence with large intersection over countable fields,  Israel Journal of Mathematics, 214 (2016), no. 1, 109-120.

\bibitem{comb} P. Csikv\'{a}ri, K. Gyarmati and A. S\,{a}rk\"{o}zy: Density and Ramsey type results on algebraic equations with
restricted solution sets, Combinatorica 32 (4) (2012), 425–449.


\bibitem{key-5} M. Di Nasso and M. Ragosta: Monochromatic exponential
triples: an ultrafilter proof, Proc. Amer. Math. Soc. 152 (2024), 81-87.

\bibitem{key-6} M. Di Nasso and M. Ragosta: Central sets and infinite
monochromatic exponential patterns, arXiv:2211.16269.

\bibitem{DHS} D. De, N. Hindman, and D. Strauss: A new and stronger Central Sets Theorem, Fund. Math. 199 (2008), 155-175.

\bibitem{F} H. Furstenberg: Recurrence in Ergodic Theory and Combinatorial Number Theory, Princeton University Press, 1981.


\bibitem{GL} D. Glasscock, and A. N. Le   Dynamically syndetic sets and the combinatorics of syndetic, idempotent filters, arXiv:2408.12785.

\bibitem{partition} S. Goswami: Partition regularity of system of nonlinear equations, arXiv:2407.05542.


\bibitem{key-8} R. L. Graham, B. L. Rothschild, and J. H. Spencer:
Ramsey theory, 2nd ed., Wiley-Interscience Series in Discrete Mathematics
and Optimization, John Wi- ley \& Sons, Inc., New York, 1990.

\bibitem{key-1} D. Hilbert: Ueber die Irreducibilit\"{ }at ganzer
rationaler Functionen mit ganzzahligen Coeffizienten, J. Reine Angew.
Math. 110 (1892), 104--129.


\bibitem{key-21} N. Hindman: Finite sums from sequences within
cells of partitions of $\N$, J. Comb. Theory (Series A) 17 (1974), 1-11.

\bibitem{h1} N. Hindman: Partitions and sums and products of integers. Trans. Amer. Math. Soc.,
247:227–245, 1979.

\bibitem{key-50} N. Hindman: Problems and new results in the algebra of Beta S and Ramsey Theory, in "Unsolved problems on mathematics for the 21st century", J. Abe and S. Tanaka eds., IOS Press, Amsterdam (2001), 295-305.

\bibitem{h2} N. Hindman: Monochromatic sums equal to products in $\N$, Integers 11A (2011) 10.

\bibitem{key-22} N. Hindman and D. Strauss: Algebra in the Stone-\v{C}ech
Compactifications: theory and applications, second edition, de Gruyter,
Berlin, 2012.

\bibitem{hsz} N. Hindman, D. Strauss, and L. Zamboni: Recurrence
in the dynamical system $\left(X,(T_{s})_{s\in S}\right)$ and ideals
of $\beta S$, Indagationes Mathematicae 29 (2018), 293-312. 




\bibitem{key-4} J. Sahasrabudhe: Exponential patterns in arithmetic
Ramsey Theory, Acta Arithmetica 182.1 (2018), 13--42.

\bibitem{js} J. Sahasrabudhe: Monochromatic solutions to systems
of exponential equations, Journal of Combinatorial Theory. (Series
A) 158 (2018), 548.


\bibitem{key-2} I. Schur: \"{U}ber die Kongruenz $x^{m}+y^{m}\equiv z^{m}(\mod p)$,
Jahresbericht der Deutschen Math. -Verein. 25 (1916), 114--117.

\bibitem{SY} H. Shi, and H. Yang: Nonmetrizable topological dynamical characterization of central sets, Fundam. Math. 150(1996) 1-9.

\bibitem{key-3} A. Sisto: Exponential triples, Electron. J. Combin.
18 (2011), Article 147, 17.

\bibitem{key-65}  B. van der Waerden: \textit{Beweis einer Baudetschen
Vermutung}, Nieuw Arch. Wiskunde 19 (1927), 212-216.
\end{thebibliography}
\end{document}